\newtheorem{theorem}{Theorem}[section]
\newtheorem{proposition}[theorem]{Proposition}
\theoremstyle{definition}
\newtheorem{definition}[theorem]{Definition}
\newtheorem{notation}[theorem]{Notation}
\theoremstyle{remark}
\newtheorem{remark}[theorem]{Remark}
\newcommand{\bd}{\partial}
\newcommand{\R}{\mathbb{R}}
\newcommand{\rn}{\mathbb{R}^n}
\newcommand{\rno}{\mathbb{R}^{n+1}}
\newcommand{\cL}{\mathcal{L}}
\newcommand{\cv}{\text{Conv}}
\newcommand{\cA}{\Lambda}
\newcommand{\cW}{\theta}
\newcommand{\Sj}{\Gamma}
\newcommand{\bpsi}{\widebar{\psi}}
\newcommand{\grad}{\upsilon}
\newcommand{\subscript}[2]{$#1 _ #2$}
\title{The evolution of complete non-compact graphs \\ by powers of  Gauss  curvature }
\author{Kyeongsu Choi}
\address{ {\bf Kyeongsu Choi:} Department of Mathematics, Columbia University, 2990 Broadway, New York, NY 10027, USA.}
\email{kschoi@math.columbia.edu}
\author{Panagiota Daskalopoulos}
\address{ {\bf P. Daskalopoulos:} Department of Mathematics, Columbia University, 2990 Broadway, New York, NY 10027, USA.}
\email{pdaskalo@math.columbia.edu}
\author{Lami Kim}
\address{ {\bf Lami Kim:} Department of Mathematics, Tokyo Institute of Technology, Ookayama, 152-8550, Japan.}
\email{lmkim@math.titech.ac.jp}
\author{Kiahm Lee}
\address{ {\bf Ki-Ahm Lee:} Department of Mathematical Sciences, Seoul National University, Seoul 151-747, Korea, 
\& Center for Mathematical Challenges, Korea Institute for Advanced Study, Seoul 130-722, Korea.}
\email{kiahm@snu.ac.kr}
\begin{document}

\maketitle

\section{Introduction}

We consider a family of complete non-compact strictly convex hypersurfaces $\Sigma_t$ embedded in $\rno$
which  evolve  by  any positive power of their Gauss curvature $K$. 

Given a complete and strict convex hypersurface $\Sigma_0$ embedded in $\rno$, we let $F_0:M^n \rightarrow \rno$ be an immersion  with $F_0(M^n)=\Sigma_0$. 
For a given number $\alpha >0$, we say that an  one-parameter family of immersions $F:M^n\times (0,T) \rightarrow \rno$ is a solution of the 
{\em $\alpha$-Gauss curvature flow}, 
 if for each $t\in (0,T)$, $F(M^n,t)=\Sigma_t$  is a complete and strictly convex hypersurface embedded in $\rno$, and $F(\cdot,t)$ satisfies 
 \begin{equation}\label{eq:INT aGCF}
\begin{cases} 
  \frac{\bd}{\bd t} F(p,t) & = \; K^{\alpha} (p,t)\,  \vec{n}  (p,t) \\
\displaystyle \lim_{t \to 0}  F(p , t ) & = \; F_0(p)
\end{cases} \tag{$*^{\alpha}$}
\end{equation}
where $K(p,t)$ is the Gauss curvature of $\Sigma_t$ at $F(p,t)$ and $\vec{n}(p,t)$ is the interior unit  normal vector of $\Sigma_t$ at the point 
$F(p,t)$.

\medskip

The evolution of compact hypersurfaces by the $\alpha$-Gauss curvature flow is widely studied. 
The case $\alpha=1$ corresponds  to the classical  Gauss curvature
flow which was first introduced by W. Firey in \cite{Fir74GCF} and later studied by 
K. Tso in \cite{Tso85GCF}. Other works include those in  \cite{A99GCF, DH99GCFflat, DL04GCFflat, DSa12PMA, H93GCFflat}. 
The power case was  studied by  B. Chow in \cite{Chow85GCF} and also in the works \cite{A00aGCF, AC11aGCF, ANG15aGCF, KL13aGCF, KLR13aGCFflat}.

In \cite{DSa12PMA} the optimal regularity of viscosity solutions of \eqref{eq:INT aGCF}, which are not necessarily strictly convex, was studied.  These results are local and apply to both compact or non-compact settings. 

\smallskip

In this work we will assume that the initial surface $\Sigma_0$ is a complete  convex graph  over a  domain $\Omega \subset \rn$, that is $\Sigma_0=\{ (x,  u_0(x)): x \in \Omega\}$
for some function $ u_0: \Omega \to \R$. The domain $\Omega$ may be bounded or unbounded.
In particular, the case where $\Sigma_0$ is an entire graph over $\rn$ will be included. 
Since the regularity results of \cite{DSa12PMA} apply in this setting  as well, we will be mostly concerned here
with the questions of existence rather than those of regularity and we have chosen to  take a more classical approach to the problem  and assume  that  the initial hypersurface 
$\Sigma_0$ is {\em  locally uniformly convex}. Our local a priori estimates will  guarantee that the solution is  
locally uniformly convex for all $t >0$ and therefore it becomes instantly  smooth independently from the regularity of the initial data. 
In the case of a compact initial data this result was shown in \cite{A99GCF, AC11aGCF}.

\smallskip

K. Ecker and G. Huisken in  \cite{HE89MCFasymptotic, HE91MCFexist}   studied the evolution of entire graphs
by Mean curvature.    In particular, it was  shown in \cite{HE91MCFexist}  that in some sense the Mean curvature flow  on {\em entire graphs}
behaves better than the  heat equation on $\rn$, namely an entire graph  
solution exists for  all time independently from the growth of the initial surface at infinity. 
The initial entire graph is assumed to be  only locally Lipschitz.  
This result is based on a clever   local gradient estimate which is then  combined with the evolution 
of $|A|^2$ (the square sum of the principal curvatures) to give a  local bound  on $|A|^2$,  independent from the behavior of the
solution at  spatial infinity. 
The latter is achieved  by adopting the  well known technique 
of   Caffarelli, Nirenberg and Spruck  in \cite{CNS88trick} in this geometric setting. More recently,  S{\'a}ez  and Schn{\"u}rer \cite{OS14CpltMean}  showed   the existence of  complete solutions of the Mean curvature flow 
for an  initial hypersurface which is a graph $\Sigma_0=\{(x,u_0(x)):x\in \Omega_0\}$ over a bounded domain  $\Omega_0$,  and $u_0(x) \to +\infty$ as $x \to \bd \Omega_0$. 

\smallskip 
An  open question between the experts in the field is whether the techniques of Ecker and Huisken in
\cite{HE89MCFasymptotic, HE91MCFexist} can be extended to  the fully-nonlinear setting and in particular on entire convex graphs 
evolving  by the  $\alpha$-{\em Gauss curvature flow}. In this work we will show that this is the case as our main result shows.

\begin{theorem} \label{thm:INT Existence}
Let  $\Sigma_0=\{ (x,  u_0(x)): x \in \Omega\}$ be a locally uniformly convex {\em (}defined below{\em )} graph given by a  function $ u_0:\Omega \rightarrow \R$ defined
on a convex open domain $\Omega \subset \rn$ such that  
\begin{enumerate}
\item [{\em (i)}] $ u_0 $ attains its minimum in $\Omega$ and $\inf_{\Omega} u_0 \geq 0$; 
\item [{\em (ii)}] if  $ \Omega \neq \rn$, then  
${\displaystyle  \lim_{x \rightarrow x_0}   u_0(x) = +\infty}$, for all $x_0 \in \bd \Omega$;
\item [{\em (iii)}] if  $\Omega$ is unbounded, then 
${\displaystyle \lim_{R \rightarrow +\infty}\big( \inf_{\Omega \setminus B_R(0)}   u_0 \, \big)  = +\infty}$.
\end{enumerate}

Then, given an immersion $F_0:M^n \to \rno$ such that  $\Sigma_0=F_0(M^n)$,  and for any $\alpha \in (0,+\infty)$, there exists a complete, smooth 
and strictly convex solution $\Sigma_t:=F(M^n,t)$ of the $\alpha$-Gauss curvature flow \emph{(\ref{eq:INT aGCF})} which is defined for all time  $0< t < +\infty$.  
%In addition, each $\Sigma_t$ is the graph of a function $u(x,t)$ defined on $\Omega$ ; $\Sigma_t =\{(x,u(x,t)):x \in \Omega\}$.
In addition, there is a smooth and strictly convex  function  $ u:\Omega\times (0,+\infty)\rightarrow \mathbb{R}$ satisfying  $\Sigma_t=\{ (x, u(\cdot,t)): \, x \in \Omega\}$, $\displaystyle \lim_{t \rightarrow 0} u(x,t)  = u_0(x)$, and the following 
\begin{equation}\label{eq:INT aGCFeq}
 \frac{\bd u}{\bd t}  = \; \displaystyle \frac{ (\det D^2  u)^\alpha}{(1+|D u|^2)^{\frac{(n+2)\alpha-1}{2}}}. 
\tag{$**^{\alpha}$}
\end{equation}
\end{theorem}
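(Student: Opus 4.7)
The plan is to approximate $\Sigma_0$ by a sequence of smooth, compact, strictly convex hypersurfaces $\Sigma_0^k \subset \rno$ that exhaust $\Sigma_0$ from below, solve the $\alpha$-Gauss curvature flow for each approximant using the existence theory for compact strictly convex initial data \cite{Chow85GCF, A00aGCF}, and pass to the limit using local a priori estimates that are independent of $k$. Concretely, for a sequence of heights $h_k \to \infty$, I would take $\Sigma_0 \cap \{x_{n+1} \le h_k\}$, which is compact by hypotheses (ii) and (iii), and close it from above by a smooth strictly convex cap to obtain $\Sigma_0^k$. The resulting smooth compact flows $\Sigma_t^k$ are nested by the avoidance principle and serve as interior approximants to the desired complete solution.

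The local a priori estimates needed, with constants independent of $k$, are: (a) a local upper bound on the speed $K^\alpha$ of Tso type \cite{Tso85GCF}; (b) a local gradient bound for the graph function $u^k$, modelled on the Ecker--Huisken method \cite{HE89MCFasymptotic, HE91MCFexist} and implemented via a concave cutoff on the hypersurface in the spirit of Caffarelli--Nirenberg--Spruck \cite{CNS88trick}; (c) a local upper bound on $|A|^2$, obtained by coupling the evolution equation for $|A|^2$ with (a), (b) and a suitable cutoff; and (d) a local positive lower bound on the Gauss curvature $K$, preserving strict convexity on compact subsets and hence keeping the equation uniformly parabolic there. With these four bounds in hand, Krylov--Safonov together with Evans--Krylov followed by Schauder bootstrapping yield $C^\infty_{\mathrm{loc}}$ estimates.

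The main obstacle will be estimate (c) combined with (d). For Mean curvature flow the evolution of $|A|^2$ admits a clean cutoff argument because the equation is quasilinear, but here the linearized operator is fully nonlinear and degenerates whenever some principal curvature approaches zero; controlling $|A|^2$ from above and $K$ from below must therefore be carried out in parallel, and the cutoff must be chosen carefully so as to absorb the fully nonlinear error terms. Once these local bounds are established, a diagonal subsequence of $\Sigma_t^k$ converges smoothly on compact subsets of $\Omega \times (0,\infty)$ to a smooth strictly convex graph solution $u$ of \eqref{eq:INT aGCFeq}. Completeness of $\Sigma_t$ for every $t > 0$ follows from (ii), (iii) and the comparison principle, using barriers (for instance shifted self-similar solutions) to force $u(\cdot, t) \to +\infty$ on $\bd \Omega$ and as $|x| \to \infty$. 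Long-time existence follows because the uniform local estimates rule out any finite-time local singularity while the approximants exist for all $t > 0$, and the initial trace $\lim_{t \to 0} u(\cdot, t) = u_0$ is obtained by standard monotone convergence applied to the nested sequence.
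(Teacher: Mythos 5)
Your overall scheme (compact approximants, local interior estimates uniform in the approximation parameter, diagonal limit, barriers) is the same as the paper's. But two of the key technical steps, as you describe them, do not close.

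First, the estimate you single out as the main obstacle --- a local upper bound on $|A|^2$ coupled with a local lower bound on $K$ --- is genuinely circular as stated and is not what the paper does. To run a maximum-principle argument for $|A|^2$ under the $\alpha$-Gauss curvature flow you need the linearized operator $\cL = \alpha K^\alpha b^{ij}\nabla_i\nabla_j$ to be uniformly elliptic, which already requires $\lambda_{\min}$ bounded below; and the lower bound on $K$ alone does not give $\lambda_{\min}$ bounded below unless you also control $\lambda_{\max}$, i.e.\ $|A|^2$. The paper breaks this cycle by proving a \emph{direct} local lower bound on $\lambda_{\min}$, independent of any speed or $|A|^2$ control, via a Pogorelov-type maximum principle applied to $\psi_\beta^{\,n(1+1/\alpha)}\, b^{11}/g^{11}$ (Theorem~\ref{thm:LB Local Lower bounds}), combined with Euler's inequality $b^{ii}/g^{ii} \le \lambda_{\min}^{-1}$ (Proposition~\ref{prop:LB Euler's formula}). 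This gives the lower curvature pinching outright; the speed estimate $K^\alpha \le C$ (Theorem~\ref{thm:SE Speed Estimate}) is then proved \emph{using} that pinching as an input, and the two together yield the upper bound on $|A|^2$ as a corollary, since $\lambda_{\max} \le K/\lambda_{\min}^{\,n-1}$. You should replace your items (c) and (d) by this single self-contained $\lambda_{\min}$ estimate, which is where the real work of the paper lies.

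Second, your argument for $T=\infty$ --- "the approximants exist for all $t>0$" --- is false: the compact approximants are closed strictly convex hypersurfaces and by Andrews' theorem shrink to a point in finite time $T_j$. The correct mechanism is that $T_j \to \infty$ as the approximation height $j\to\infty$ (the interior ball $B_R(x_0)\subset\Omega$ gives $T_j\gtrsim R^{n\alpha+1}$), together with a barrier argument showing $\Omega_t = \Omega$ for all $t$; for this the paper constructs explicit parabola-shaped rotationally symmetric supersolutions $\Phi^{\delta,m}_t$ rather than invoking self-similar solutions (which do not exist in the needed generality for all $\alpha$ and all domains). Two smaller points: the paper builds its approximants by reflection across the hyperplane $\{x_{n+1}=j\}$ rather than by capping, because the reflection symmetry automatically forces the lower half to remain a graph over $\rn$; and the $C^{2,\beta}$ bootstrap cannot invoke Evans--Krylov directly, since $(\det D^2 u)^\alpha$ is not concave for $\alpha > 1/n$ --- the paper first gets a H\"older bound on $u_t$ from Krylov--Safonov and then rewrites the equation as the concave $(\det D^2u)^{1/n} = u_t^{1/(n\alpha)}(1+|Du|^2)^{((n+2)\alpha-1)/(2n\alpha)}$ before applying Caffarelli's estimate.
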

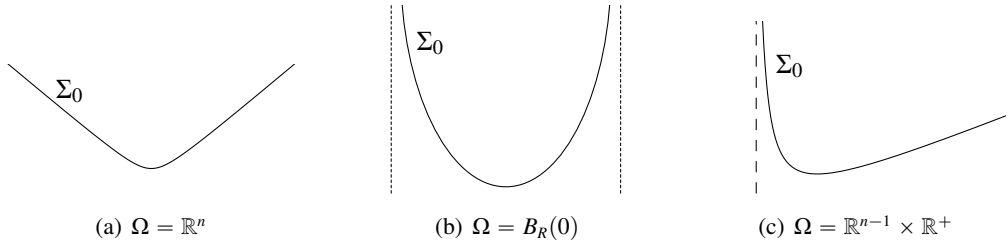
\begin{figure}[h]
\subfigure[$\Omega=\rn$]{
\begin{tikzpicture}[line cap=round,line join=round,>=triangle 45,x=0.18cm,y=0.18cm]
\clip(-11.6278237129,-0.532800819239) rectangle (11.6020914256,9);
\draw [samples=50,domain=-0.99:0.99,rotate around={90.:(0.,0.)},xshift=0.cm,yshift=0.cm] plot ({1.28919490362*(1+(\x)^2)/(1-(\x)^2)},{1.52904430952*2*(\x)/(1-(\x)^2)});
\begin{scriptsize}
\draw[color=black] (-6,7) node[scale=1.3] {$\Sigma_0$};
\end{scriptsize}
\end{tikzpicture}
}
\subfigure[$\Omega = B_R(0)$]{
\begin{tikzpicture}[line cap=round,line join=round,>=triangle 45,x=0.5cm,y=0.5cm]
\clip(-4.67471793468,-1.) rectangle (4.74524982753,4);
\draw [rotate around={90.:(0.,5.)}] (0.,5.) ellipse (2.91547594742cm and 1.4cm);
\draw [dash pattern=on 1pt off 1pt] (3.05,-1.) -- (3.05,5.31432920204);
\draw [dash pattern=on 1pt off 1pt] (-3.05,-1.) -- (-3.05,5.31432920204);
\begin{scriptsize}
\draw[color=black] (-2,3) node[scale=1.3] {$\Sigma_0$};
\end{scriptsize}
\end{tikzpicture}
}
\subfigure[$\Omega = \mathbb{R}^{n-1}\times \mathbb{R}^+$]{
\begin{tikzpicture}[line cap=round,line join=round,>=triangle 45,x=0.3cm,y=0.3cm]
\clip(-2.22055421895,1.36898033791) rectangle (11.0682860864,9);
\draw [samples=50,domain=-0.99:0.99,rotate around={56.309932474:(0.,0.)},xshift=0.cm,yshift=0.cm] plot ({2.9868607644*(1+(\x)^2)/(1-(\x)^2)},{2.01956994781*2*(\x)/(1-(\x)^2)});
\draw [samples=50,domain=-0.99:0.99,rotate around={56.309932474:(0.,0.)},xshift=0.cm,yshift=0.cm] plot ({2.9868607644*(-1-(\x)^2)/(1-(\x)^2)},{2.01956994781*(-2)*(\x)/(1-(\x)^2)});
\draw [dash pattern=on 4pt off 4pt] (0.,1.36898033791) -- (0.,12);
\begin{scriptsize}
\draw[color=black] (1.5,7) node[scale=1.3] {$\Sigma_0$};
\end{scriptsize}
\end{tikzpicture}
}
\caption{Examples of the initial hypersurface $\Sigma_0$}
\end{figure}

Since we have not assumed any regularity on our initial surface $\Sigma_0$, we give next  the definition of a locally uniformly convex hypersurface which is not necessarily of class $C^2$.

\begin{definition}[Uniform convexity for a hypersurface]\label{def:INT Unifrom convexity}
We denote by $C^2_{_{\small \mathcal H}}(\rno)$ the class of second order differentiable complete (either closed or non-compact) hypersurfaces embedded in $\rno$. 
For a  convex hypersurface $\Sigma \in C^2_{_{\mathcal  H} } (\rno)$, we denote by $\lambda_{\min}(\Sigma)(X)$ the smallest principal curvature of $\Sigma$ 
at $X \in \Sigma$.
Given any complete convex hypersurface $\Sigma$ and  a  point $X \in \Sigma$, we define $\lambda_{\min}(\Sigma)(X)$ by
\begin{align*}
\lambda_{\min}(\Sigma)(X) = \sup \big\{ \lambda_{\min}(\Phi)(X): X \in \Phi, \, \Phi \in C^2_{_{\mathcal  H} }  (\rno), \,  \Sigma \subset \text{the convex hull of}\, \Phi  \big\}
\end{align*}
and  we say that 
\begin{enumerate}[(a)]
\item $\Sigma$ is strictly convex, if $\lambda_{\min}(\Sigma)(X)>0$ holds for all $X \in \Sigma$;
\item $\Sigma$ is uniformly convex, if there is a constant $m>0$ satisfying $\lambda_{\min}(\Sigma)(X)\geq m $ for all $X \in \Sigma$;
\item $\Sigma$ is locally uniformly convex, if for any compact subset $A \subset \rno$, there is a constant $m_A>0$ satisfying $\lambda_{\min}(\Sigma)(X)\geq m_A $ 
for all $X \in \Sigma \bigcap A$.
\end{enumerate}
\end{definition}

\begin{remark}[General initial data]
Given a complete and locally uniformly convex  hypersurface $\Sigma_0$ embedded in $\rno$, there exist an orthogonal matrix $A \in O(n+1)$, a vector $Y_0 \in \rno$ and a function $u_0: \Omega \rightarrow \mathbb{R}$ such that $A\Sigma_0+Y_0 \coloneqq \{ Y_0+AX : X \in \Sigma_0 \}=\{ (x,u_0(x)):x \in \Omega\}$, and  the conditions (i)-(iii) 
 in Theorem \ref{thm:INT Existence} hold. Thus, Theorem \ref{thm:INT Existence} shows the existence of a smooth strictly convex solution $\Sigma_t$ of \eqref{eq:INT aGCF} for any complete locally uniformly  convex hypersurface $\Sigma_0$.
\end{remark}

\begin{remark}[Translating solitons and asymptotic behavior]  J. Urbas in  \cite{U98GCFsoliton} studies  complete noncompact convex 
 solutions of \eqref{eq:INT aGCF} which are self-similar.  In particular he shows  that for any 
 $\alpha > 1/2$ and any convex bounded 
 domain $\Omega \subset \R^n$, there exists a translating soliton solution of \eqref{eq:INT aGCF} which is a graph over $\Omega$. 
Conversely,  these are the only complete noncompact convex 
 solutions of \eqref{eq:INT aGCF}  for $\alpha > 1/2$ which move by translation. On the other hand, he shows that
 for $\alpha \in (0,1/2]$  and any $\lambda >0$ there exist a entire graph convex translating soliton solution of  \eqref{eq:INT aGCF}
 which moves with  speed $\lambda$. 
 It would be interesting  to see whether  these translating solitons appear as asymptotic limits,  as $t \to +\infty$,  
 of the solutions  $\Sigma_t$ to  \eqref{eq:INT aGCF} given by  Theorem  \ref{thm:INT Existence} in the corresponding range of exponents. 

\end{remark}

\smallskip

\noindent{\em Discussion on the Proof }:  The proof of Theorem \ref{thm:INT Existence} 
mainly relies on two local a'priori estimates: a local bound from below on the smallest principal 
curvature $\lambda_{\min}$ of the solution $\Sigma_t$,  given in Theorem \ref{thm:LB Local Lower bounds},  
and a local bound  from above on the speed $K^\alpha$ of $\Sigma_t$,  given in Theorem  \ref{thm:SE Speed Estimate}. 
Notice that the first estimate is used to control the speed $K^\alpha$.  The first bound follows  via   a  Pogolerov type computation involving an appropriate  cut-off function on $\Sigma_t$. The second bound uses the well known technique by  Caffarelli, Nirenberg and Spruck in \cite{CNS88trick} which was also used by Ecker and Huisken 
in the context of the Mean curvature flow in  \cite{HE91MCFexist}. 

One of the challenges comes from the fact  that the  evolution equations of the position vector $F(p,t)$ and its curvature have  the differential linearized operators   since $K^\alpha$ is not homogeneous of degree one for $\alpha\neq \frac{1}{n}$. To be specific,  while  the linearized operator $\cL  \coloneqq \frac{\bd K^{\alpha}}{\bd h_{ij}}\nabla_i \nabla_j$ appears in the evolution of curvatures (equations \eqref{eq:Pre aK_t} and \eqref{eq:Pre h_t} below),  
 the position vector $F(p,t)$ , determining the evolution of our cut off functions, 
 satisfies the different  equation $\frac{\bd}{\bd t} F= (n\alpha)^{-1}\cL \, F$. After the linearized operators are matched,  the evolution of  our  cut off function $\psi$ 
 (equation \eqref{eq:Pre psi_t}) has a reaction term which depends on the Gauss curvature. To  establish  local 
 curvature estimates we  carefully  control  the reaction terms which appear from  the gap between the differential operators.
 
 Another difficulty arises  from the non-concavity of the equation \eqref{eq:INT aGCF}. In the most challenging case   where  $n\alpha \geq 1$, \eqref{eq:INT aGCF} is neither concave nor convex equation.  In previous works  \cite{A00aGCF, ANG15aGCF, Chow85GCF, KL13aGCF}
 which concern with the compact case,  global estimates on the smallest principal 
curvature $\lambda_{\min}$  were shown  by using the support function on $S^n$. 

A brief  {\em outline} of the paper  is as follows: in section  \ref{sec-prelim} we   derive some basic equations under the $\alpha$-Gauss curvature flow and also obtain our  local gradient estimate. 
Sections \ref{sec-abounds} and \ref{sec-speed} are devoted to our two crucial local a'priori bounds,  the   lower bound  on the principal curvatures 
and the upper bound  on the speed $K^\alpha$.
In  the  final section we  establish the all time 
existence of the flow which is done in two steps: first we  show  the existence of a complete solution $\Sigma_t$ on $t \in (0,T)$,  where  $T=T_\Omega$ depends on the domain 
$\Omega$.  Then,  we  construct an 
appropriate barrier to guarantee that each $\Sigma_t$ remains  a graph over the same domain $\Omega$,  for all $t \in (0,T)$, 
implying  that $T=+\infty$ independently from the domain $\Omega$.

\subsection{Notation}
For the convenience of the reader, we summarize the following notation which will be frequently used in what follows. 
\begin{enumerate}

\item We recall that $g_{ij} = \langle F_i, F_j \rangle$, where $F_i \coloneqq \nabla_i F$. Also, we denote as usual by $g^{ij}$  the inverse matrix of
$g_{ij}$ and $F^i=g^{ij} \, F_j$.

\item Assume $\Sigma_t$ is a strictly convex graph solution of (\ref{eq:INT aGCF})  in $\rno$. Then, we let $\bar u(\cdot,t):M^n \rightarrow \mathbb{R}$ denote the 
{\em height function}  $\bar u(p,t)=\langle F(p,t),\vec{e}_{n+1} \rangle.$

\item Given  constants $M$ and $\beta \geq 0$, we define the {\em cut-off function}  $\psi_\beta$ by
\begin{align*}
\psi_\beta(p,t)= (M-\beta t-\bar u(p,t))_+ =\max\{M-\beta t-\bar u(p,t) , 0\}.
\end{align*}
In particular, we denote $\psi_0 \coloneqq (M-\bar u(p,t))_+$ by $\psi$ for convenience.

Also, given a constant $R>0$ and a point $Y \in \rno$, we define the {\em cut-off function}  $\bpsi$ by 
\begin{align*}
\bpsi(p,t)= (R^2-|F(p,t)-Y|^2)_+ =\max\{R^2-|F(p,t)-Y|^2 , 0\}
\end{align*}

\item For a strictly convex smooth hypersurface $\Sigma_t$, we denote by $b^{ij}$ the the inverse matrix $(h^{-1})^{ij}$ of its  {\em second fundamental form}  $h_{ij}$, namely $b^{ij}h_{jk}=\delta^i_k$.  Notice that the eigenvalues of $b^{ij}$ on an orthonormal frame are the {\em principal radii of curvature}.

\item We denote by $\cL$  the {\em linearized }   operator 
$$\cL =\alpha  K^{\alpha} b^{ij}\nabla_i \nabla_j$$
Furthermore, $\langle \;, \;\rangle_\cL$ denotes the associated inner product 
$\displaystyle \langle \nabla f,\nabla g \rangle_\cL =  \alpha  K^{\alpha} b^{ij}\nabla_i f \nabla_j g$, 
 where $f,g$ are differentiable functions on $M^n$, and $\|\cdot\|_\cL $ denotes the $\cL$-norm given by the inner product $\langle \;, \;\rangle_\cL$ 

\item We denote by  $\grad=\langle \vec{n},\vec{e}_{n+1} \rangle^{-1}$ the {\em gradient function} (as in \cite{HE91MCFexist}).

\item $H$  denotes the {\em  mean curvature}.
\end{enumerate}

\section{Preliminaries}\label{sec-prelim}

In this section we will derive some basic equations under the $\alpha$-Gauss curvature flow and also obtain a local gradient estimate. 
We begin by showing some basic  evolution equations. 

\begin{proposition} Assume that $\Omega$ and $\Sigma_0$ satisfy the assumptions in  \emph{Theorem \ref{thm:INT Existence}}  and let $\Sigma_t$ be a complete strictly convex   graph solution of \emph{(\ref{eq:INT aGCF})}. Then, the following hold
\begin{align*}
&\bd_t  g_{ij} = - 2 K^{\alpha} h_{ij}  \tag{2.1}\label{eq:Pre g_t}\\
&\bd_t  g^{ij} =  2 K^{\alpha} h^{ij}  \tag{2.2}\label{eq:Pre 1/g_t}\\
&\bd_t  \vec{n} = -\nabla K^{\alpha}\coloneqq -(\nabla_j K^{\alpha}) F^j \tag{2.3}\label{eq:Pre n_t}\\
&\bd_t \psi_\beta =  \cL \,\psi +(n\alpha -1) \, \grad^{-1} K^\alpha-\beta\tag{2.4}\label{eq:Pre psi_t} \\
&\bd_t \bpsi \leq \cL \,\bpsi +2\big(n\alpha+1)( \lambda_{\min}^{-1}+ R)K^{\alpha}\tag{2.5}\label{eq:Pre bpsi_t} \\
&\bd_t h_{ij} = \cL \, h_{ij}+\alpha K^{\alpha}(\alpha b^{kl}b^{mn}-b^{km}b^{nl}) \nabla_i h_{mn}\nabla_j h_{kl}+\alpha K^{\alpha}Hh_{ij}-(1+n\alpha)K^{\alpha} h_{ik}h^k_j \tag{2.6}\label{eq:Pre h_t}\\
&\bd_t K^\alpha = \cL \, K^{\alpha} +\alpha K^{2\alpha }H   \tag{2.7}\label{eq:Pre aK_t}\\
&\bd_t b^{pq}  =\cL  \, b^{pq}-\alpha K^{\alpha}b^{ip}b^{jq}(\alpha b^{kl}b^{mn}+b^{km}b^{nl}) \nabla_i h_{kl}\nabla_j h_{mn}-\alpha K^{\alpha}Hb^{pq}+(1+n\alpha)K^{\alpha}g^{pq}\tag{2.8}
\label{eq:Pre b_t} \\ 
&\bd_t \grad =\cL \, \grad -2\grad^{-1}\|\nabla \grad \|^2_\cL -\alpha K^{\alpha}H\grad\tag{2.9}\label{eq:Pre grad_t}
\end{align*}
\end{proposition}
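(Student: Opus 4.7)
The plan is to verify (\ref{eq:Pre g_t})--(\ref{eq:Pre grad_t}) in order of dependence, using throughout the interior-normal structure equations $\nabla_i F_j = h_{ij}\vec{n}$ (Gauss) and $\nabla_i \vec{n} = -h_i^k F_k$ (Weingarten), the Codazzi symmetry $\nabla_i h_{jk} = \nabla_j h_{ik}$, and Simons' commutator (which in the flat ambient $\rno$ is given purely in terms of $h$ via Gauss, $R_{abcd}=h_{ac}h_{bd}-h_{ad}h_{bc}$). I will repeatedly use $\bd K^\alpha/\bd h_{ij} = \alpha K^\alpha b^{ij}$ and $\bd b^{ij}/\bd h_{kl} = -b^{ik}b^{jl}$. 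The first block (\ref{eq:Pre g_t})--(\ref{eq:Pre n_t}) is immediate from $\bd_t F = K^\alpha \vec n$: differentiating $g_{ij}=\langle F_i, F_j\rangle$ together with $\bd_t F_k = (\nabla_k K^\alpha)\vec n - K^\alpha h_k^j F_j$ gives (\ref{eq:Pre g_t}); differentiating $g^{ij}g_{jk}=\delta^i_k$ gives (\ref{eq:Pre 1/g_t}); and since $|\vec n|=1$, $\bd_t \vec n$ is tangential, so $\langle \bd_t\vec n, F_k\rangle = -\langle \vec n, \bd_t F_k\rangle = -\nabla_k K^\alpha$ yields (\ref{eq:Pre n_t}).

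For (\ref{eq:Pre psi_t}), $\bd_t\bar u = K^\alpha \grad^{-1}$, and Gauss gives $\nabla_i\nabla_j\bar u = h_{ij}\grad^{-1}$, hence $\cL\bar u = n\alpha K^\alpha\grad^{-1}$ using $b^{ij}h_{ij}=n$; since $\cL$ annihilates the $t$-dependent constant in $\psi_\beta$, subtracting yields the claimed identity. For (\ref{eq:Pre bpsi_t}), $\bd_t|F-Y|^2 = 2K^\alpha\langle F-Y,\vec n\rangle$ and $\nabla_i\nabla_j |F-Y|^2 = 2 g_{ij} + 2h_{ij}\langle F-Y,\vec n\rangle$ combine to $\cL|F-Y|^2 = 2\alpha K^\alpha\,\mathrm{tr}(b) + 2n\alpha K^\alpha\langle F-Y,\vec n\rangle$, so $(\bd_t - \cL)\bpsi = 2\alpha K^\alpha\,\mathrm{tr}(b) + 2(n\alpha-1)K^\alpha\langle F-Y,\vec n\rangle$; the pointwise estimates $\mathrm{tr}(b)\leq n/\lambda_{\min}$ and $|\langle F-Y,\vec n\rangle|\leq R$ on $\mathrm{supp}\,\bpsi$ close the bound with the constant $2(n\alpha+1)(\lambda_{\min}^{-1}+R)$.

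The main computation is (\ref{eq:Pre h_t}). Differentiating $h_{ij}=\langle \bd_i\bd_j F, \vec n\rangle$ in $t$ and using $\langle \bd_i\bd_j\vec n,\vec n\rangle = -h_i^k h_{kj}$ (obtained by differentiating Weingarten once) yields the base identity $\bd_t h_{ij} = \nabla_i\nabla_j K^\alpha - K^\alpha h_i^k h_{kj}$. The chain rule gives
\[
\nabla_i\nabla_j K^\alpha = \alpha K^\alpha b^{kl}\nabla_i\nabla_j h_{kl} + \alpha K^\alpha(\alpha b^{kl}b^{mn} - b^{km}b^{nl})\nabla_i h_{mn}\nabla_j h_{kl}.
\]
Commuting $b^{kl}\nabla_i\nabla_j h_{kl}$ to $b^{kl}\nabla_k\nabla_l h_{ij}$ via two Codazzi steps and one Ricci identity (with Gauss substitution for the curvature of $\Sigma_t$) produces the correction $H h_{ij} - n h_i^k h_{kj}$; multiplying by $\alpha K^\alpha$ converts the leading term into $\cL h_{ij}$, and combining with the $-K^\alpha h_i^k h_{kj}$ from the base identity collapses the quadratic curvature into $\alpha K^\alpha H h_{ij} - (1+n\alpha)K^\alpha h_i^k h_{kj}$, giving (\ref{eq:Pre h_t}).

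From (\ref{eq:Pre h_t}) the remaining identities are algebraic. Equation (\ref{eq:Pre aK_t}) comes from $\bd_t K^\alpha = \alpha K^\alpha(b^{ij}\bd_t h_{ij} + 2K^\alpha H)$ --- using $K = \det h/\det g$ together with (\ref{eq:Pre g_t}) --- after substituting (\ref{eq:Pre h_t}) and matching the second- and first-order terms against the expansion of $\cL K^\alpha$. For (\ref{eq:Pre b_t}), differentiating $b^{pq}h_{qr}=\delta^p_r$ gives $\bd_t b^{pq} = -b^{pi}b^{jq}\bd_t h_{ij}$ and $\nabla_i b^{pq} = -b^{pk}b^{ql}\nabla_i h_{kl}$; assembling $\cL b^{pq}$ from the resulting second-derivative expansion of $b^{pq}$ and substituting (\ref{eq:Pre h_t}) yields the stated evolution. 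For (\ref{eq:Pre grad_t}), write $\omega := \langle\vec n,\vec e_{n+1}\rangle = \grad^{-1}$; then (\ref{eq:Pre n_t}) together with $\nabla_i\omega = -h_i^k\nabla_k\bar u$ and $\nabla_j\nabla_i\omega = -(\nabla_j h_i^k)\nabla_k\bar u - h_i^k h_{jk}\omega$, combined with the Codazzi identity $\alpha K^\alpha b^{ij}\nabla_i h_j^k = g^{kl}\nabla_l K^\alpha$, produces the exact cancellation of all first-order terms in $\bd_t\omega-\cL\omega$ and leaves $\bd_t\omega-\cL\omega = \alpha K^\alpha H\omega$; converting via the reciprocal rule then produces (\ref{eq:Pre grad_t}), with the quadratic $-2\grad^{-1}\|\nabla\grad\|_\cL^2$ arising from the second derivative of $1/\omega$. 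I expect the sign tracking in (\ref{eq:Pre h_t}) to be the principal obstacle, as the interior-normal convention, Weingarten, and the Simons commutator must all be applied consistently to recover the coefficient $-(1+n\alpha)$.
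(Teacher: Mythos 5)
Your proposal is correct and follows essentially the same route as the paper: the base identity $\bd_t h_{ij}=\nabla_i\nabla_j K^\alpha - K^\alpha h_i^k h_{kj}$ expanded via the chain rule and a Codazzi/Ricci commutation to obtain \eqref{eq:Pre h_t}, with \eqref{eq:Pre aK_t} and \eqref{eq:Pre b_t} derived algebraically from it, and \eqref{eq:Pre psi_t}--\eqref{eq:Pre bpsi_t} from the observation that $\cL$ applied to $\bar u$ (equivalently to $F$) reproduces $n\alpha\,\bd_t$. The only cosmetic differences are that the paper derives \eqref{eq:Pre psi_t}, \eqref{eq:Pre bpsi_t} from the vector identity $\cL F=(n\alpha)\,\bd_t F$ rather than from $\nabla_i\nabla_j\bar u=h_{ij}\grad^{-1}$ directly, and the paper computes \eqref{eq:Pre grad_t} directly in terms of $\grad$ rather than via $\omega=\grad^{-1}$; both yield the same cancellations and the same final formulas.
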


\smallskip

\noindent\textit{Proof of} (\ref{eq:Pre g_t}).  Observe $\bd_t g_{ij}=\langle \bd_t \nabla_i F,\nabla_j F \rangle+\langle  \nabla_i F, \bd_t \nabla_j F \rangle =\langle  \nabla_i  \bd_t F,\nabla_j F \rangle+\langle  \nabla_i F,  \nabla_j \bd_t F \rangle $. Hence, $\langle  \nabla_i F,  \bd_t F \rangle = 0$ gives $\bd_t g_{ij}= -2 \langle \bd_t F,   \nabla_i \nabla_j F \rangle = -2 \langle K^{\alpha}\vec{n},   h_{ij}\vec{n} \rangle = -2K^{\alpha}h_{ij}$.

\bigskip

\noindent\textit{Proof of} (\ref{eq:Pre 1/g_t}).\,  From $g^{ij}g_{jk}=\delta^i_k$, we can derive $\bd_t g^{ij}=-g^{ik}g^{jl}\bd_t g_{kl}=2K^{\alpha}g^{ik}g^{jl}h_{kl}=2K^{\alpha}h^{ij}$.

\bigskip

\noindent\textit{Proof of} (\ref{eq:Pre n_t}).   $|\vec{n}|^2=1$ implies $\langle \bd_t\vec{n},\vec{n} \rangle =0$. Also, $\langle \vec{n}, \nabla_i F \rangle =0 $ leads to 
\begin{align*}
\langle \bd_t \vec{n},\nabla_i  F\rangle =-\langle \vec{n},\bd_t \nabla_i F \rangle =-\langle \vec{n},\nabla_i \bd_t F \rangle =-\langle \vec{n},\nabla_i (K^{\alpha}\vec{n}) \rangle =-\nabla_i \, K^{\alpha}
\end{align*} 
from which \eqref{eq:Pre n_t} readily follows. 

\bigskip

\noindent\textit{Proof of} (\ref{eq:Pre psi_t}).  The definition of the linearized operator $\cL\eqcolon \alpha K^{\alpha}b^{ij}\nabla_i\nabla_j$ gives
\begin{align*}\label{eq:Pre Position_t}
\cL F \coloneqq \alpha K^{\alpha}b^{ij}\, \nabla_i \nabla_j F=\alpha K^{\alpha}b^{ij}\, h_{ij} \, \vec{n}=n\alpha K^{\alpha}\,  \vec{n}=(n\alpha) \bd_t F\tag{2.10}
\end{align*}
which yields that $\cL \bar u =(n\alpha)\bd_t \bar u $. Therefore, 
\begin{align*}
\bd_t \psi_\beta =-\beta -\bd_t \bar u=  -\cL \bar u +(n\alpha -1) \bd_t \bar u -\beta  
\end{align*}
holds  on the support  of $\psi_\beta\coloneqq(M-\beta t-\bar u(p,t))_+$.
Substituting for   $\bd_t \bar u = \langle \bd_t F ,\vec{e}_{n+1}\rangle = \langle K^{\alpha}\vec{n} ,\vec{e}_{n+1}\rangle=K^{\alpha}\grad^{-1}$,
where $\grad\coloneqq \langle \vec{n} ,\vec{e}_{n+1}\rangle^{-1}$, yields \eqref{eq:Pre psi_t}. 
\bigskip

\noindent\textit{Proof of} (\ref{eq:Pre bpsi_t}). By \eqref{eq:Pre Position_t}, on the support of  $\bpsi\eqcolon (R^2-|F-Y|^2)_+$  we have 
\begin{align*}
\bd_t \bpsi =&  -2\langle (\cL F -(n\alpha)\bd_t F)+\bd_t F,F-Y \rangle =-2\langle \cL F ,F-Y \rangle +2(n\alpha-1)\langle K^{\alpha}\vec{n},F-Y \rangle \\
\leq &\, \cL \,\bpsi +2\|\nabla F \|^2_{\cL}+2K^{\alpha}|n\alpha-1||F-Y| \leq \cL\,\bpsi +2\alpha K^{\alpha}b^{ij}g_{ij}+2(n\alpha+1)RK^{\alpha}\\
\leq &\, \cL \,\bpsi +2n\alpha \lambda_{\min}^{-1}K^{\alpha} +2(n\alpha+1)R K^{\alpha} \leq \cL \,\bpsi +2(n\alpha+1)( \lambda_{\min}^{-1}+R) K^{\alpha}.
\end{align*}

\noindent\textit{Proof of} (\ref{eq:Pre h_t}).  We have 
\begin{align*}
\bd_t h_{ij}  = & \bd_t \, \langle \nabla_i \nabla_j F,\vec{n}\rangle =\langle \nabla_i \nabla_j \bd_t F,\vec{n}\rangle+\langle \nabla_i \nabla_j F,\bd_t \vec{n}\rangle =\langle \nabla_i \nabla_j (K^{\alpha}\vec{n}),\vec{n}\rangle+\langle h_{ij}\, \vec{n},\bd_t \vec{n} \rangle \\
= & \nabla_i \nabla_j K^{\alpha} + \langle \nabla_j K^{\alpha} \nabla_i \vec{n},\vec{n} \rangle + \langle \nabla_i K^{\alpha}\, \nabla_j \vec{n},\vec{n} \rangle +K^{\alpha}\langle \nabla_i \nabla_j \vec{n},\vec{n}\rangle  +0.  
\end{align*}
By $\langle \nabla_i \vec{n},\vec{n} \rangle=0$ and $ \langle \nabla_i \nabla_j \vec{n},\vec{n} \rangle =-\langle  \nabla_j \vec{n},
\nabla_i \vec{n} \rangle =-h_{im}h^m_j$, we deduce 
\begin{align*}\label{eq:Pre h_t with aK}
\bd_t h_{ij} =\nabla_i \nabla_j K^{\alpha} -K^{\alpha} h_{im}h^m_j.  \tag{2.11}
\end{align*}
Applying    
$K=\det(h_{ij}g^{jk})=\det(h_{ij}) \det(g^{kl})$, $\nabla \log\det(h_{ij})= b^{kl}\nabla h_{kl}\, $  and $ {\bd b^{kl}}/{\bd h_{mn}}=-b^{km}b^{ln}$
on the first term on the right hand side of  \eqref{eq:Pre h_t with aK},   yields 
\begin{align*}
\bd_t h_{ij} &= \nabla_i \nabla_j K^{\alpha} -K^{\alpha} h_{im}h^m_j =  \nabla_i (\alpha K^{\alpha}b^{kl} \nabla_j h_{kl}) -K^{\alpha} h_{im}h^m_j \\
&=  \alpha^2K^{\alpha}b^{kl}b^{mn}\nabla_i h_{mn} \nabla_j h_{kl}+\alpha K^{\alpha}\frac{\bd b^{kl}}{\bd h_{mn}}\nabla_i h_{mn} \nabla_j h_{kl}+\alpha K^{\alpha}b^{kl}\nabla_i\nabla_j h_{kl}-K^{\alpha}h_{im}h_{j}^m\\
&=\alpha K^{\alpha}b^{kl}\, \nabla_i\nabla_j h_{kl}+  \alpha K^{\alpha}(\alpha b^{kl}b^{mn}-b^{km}b^{ln})\,  \nabla_i h_{mn}\nabla_j h_{kl}-K^{\alpha}h_{im}h_{j}^m. 
\end{align*}
On the other hand, 
\begin{align*}
\alpha K^{\alpha}b^{kl}\nabla_i\nabla_j h_{kl} = &\alpha K^{\alpha}b^{kl}\nabla_i\nabla_k h_{jl}=\alpha K^{\alpha}b^{kl}\nabla_k\nabla_i h_{jl} +\alpha K^{\alpha}b^{kl}R_{ikjm}h_{l}^m+\alpha K^{\alpha}b^{kl}R_{iklm}h_{j}^m \\
=&\alpha K^{\alpha}b^{kl}\nabla_k\nabla_l h_{ij} +\alpha K^{\alpha}(h_{ij}h_{km}-h_{im}h_{kj})h^m_l b^{lk}+\alpha K^{\alpha}(h_{il}h_{km}b^{kl}-h_{im}h_{kl}b^{kl})h_{j}^m \\
= & \cL h_{ij}+\alpha K^{\alpha}(h_{ij}h_{km}-h_{im}h_{kj})g^{mk}+\alpha K^{\alpha}(h_{im}-n h_{im})\, h_{j}^m \\
= & \cL h_{ij}+\alpha K^{\alpha}H h_{ij}-n\alpha K^{\alpha}h_{im}\, h_{j}^m
\end{align*}
and  (\ref{eq:Pre h_t}) easily  follows.

\bigskip

\noindent\textit{Proof of} (\ref{eq:Pre aK_t}).  From   (\ref{eq:Pre h_t with aK}) we have  
$$\cL K^{\alpha}=\alpha K^{\alpha}b^{ij}( \bd_t h_{ij} + K^{\alpha} h_{im}h^m_j)=\alpha K^{\alpha}b^{ij} \bd_t h_{ij} +\alpha K^{2\alpha} H.$$
 Also, from $K=\det (h_{ij}) \det (g^{kl})$, we derive  that 
\begin{align*}
\bd_t K^{\alpha} &= \alpha K^{\alpha} \bd_t ( \log (\det h_{ij}) + \log (\det g^{kl}))  =  \alpha K^{\alpha}  \, b^{ij}\bd_t h_{ij} + 
\alpha K^\alpha \, g_{kl}\bd_t g^{kl} \\&= \cL K^\alpha - \alpha K^{2\alpha} H + \alpha K^\alpha \, g_{kl}\bd_t g^{kl}.
\end{align*}Aplying  (\ref{eq:Pre 1/g_t}) on the last term yields  (\ref{eq:Pre aK_t}).

\bigskip

\noindent\textit{Proof of} (\ref{eq:Pre b_t}). The identity $b^{ij}h_{jk}=\delta^i_k$ leads to  
\begin{align*}
\bd_t b^{pq} & =  -b^{ip}b^{jq}\bd_t h_{ij} \tag{2.12}\label{eq:Pre Inverse speed}\\
\nabla_m b^{pq} &  = -b^{ip}b^{jq}\nabla_m h_{ij}\tag{2.13}\label{eq:Pre Inverse gradient}
\end{align*}
Therefore, 
\begin{align*}
\nabla_n\nabla_m b^{pq} &    =-b^{jq}\nabla_n b^{ip}\nabla_m h_{ij}-b^{ip}\nabla_nb^{jq}\nabla_m h_{ij}-b^{ip}b^{jq}\nabla_n\nabla_m h_{ij}  \\
&  =2b^{jq}b^{ik}b^{pl}\nabla_n h_{kl}\nabla_m h_{ij}-b^{ip}b^{jq}\nabla_n\nabla_m h_{ij}.
\end{align*}
Hence,   $\cL b^{pq} \coloneqq \alpha K^{\alpha}\, b^{nm}\, \nabla_n\nabla_m b^{pq}$ satisfies 
\begin{align*}
\cL b^{pq} = 2\alpha K^{\alpha}b^{nm}b^{jq}b^{ik}b^{pl}\nabla_n h_{kl}\nabla_m h_{ij}-b^{ip}b^{jq}\cL h_{ij}  = 2\alpha K^{\alpha}b^{lp}b^{jq}b^{ki}b^{nm}\nabla_l h_{kn}\nabla_j h_{im}-b^{ip}b^{jq}\cL h_{ij}.
\end{align*}
Combing  the last identity with  (\ref{eq:Pre h_t}) and (\ref{eq:Pre Inverse speed}) yields 
\begin{align*}
\bd_t b^{pq} & = -b^{ip}b^{jq}(\cL h_{ij}+\alpha K^{\alpha}(\alpha b^{kl}b^{mn}-b^{km}b^{nl}) \nabla_i h_{kl}\nabla_j h_{mn}+\alpha K^{\alpha}H\, h_{ij}-(1+n\alpha)K^{\alpha} h_{ik}h^k_j) \\
&=\cL  b^{pq}-\alpha K^{\alpha}b^{ip}b^{jq}(\alpha b^{kl}b^{mn}+b^{km}b^{nl}) \nabla_i h_{kl}\nabla_j h_{mn}-\alpha K^{\alpha}Hb^{pq}+(1+n\alpha)K^{\alpha}g^{pq}
\end{align*}
which gives \eqref{eq:Pre b_t}. 
\bigskip

\noindent\textit{Proof of} (\ref{eq:Pre grad_t}).  By (\ref{eq:Pre n_t}) we have $\bd_t \grad  = -\grad^2 \, \langle \vec{e}_{n+1},\bd_t \vec{n} \rangle =\grad^2\, \langle \vec{e}_{n+1},\nabla K^{\alpha} \rangle$. Furthermore,
\begin{align*}
\cL \grad & = -\alpha K^{\alpha}b^{ij}\, \nabla_i (\grad^2 \, \langle \vec{e}_{n+1},\nabla_j \vec{n}\rangle) =-2\alpha K^{\alpha}b^{ij}\grad
\,  \nabla_i\grad\,  \langle \vec{e}_{n+1},\nabla_j \vec{n}\rangle +\alpha K^{\alpha}b^{ij}\grad^2 \, \langle \vec{e}_{n+1},\nabla_i(h_{jk}F^k)\rangle \\
& = 2 \grad^{-1}\|\nabla \grad\|^2_{\cL}  +\alpha K^{\alpha}b^{ij}h_{jk}h_i^k\, \grad^2 \, \langle \vec{e}_{n+1},\vec{n}\rangle+ \grad^2\,  \langle \vec{e}_{n+1},\alpha K^{\alpha}b^{ij} \, (\nabla_ih_{jk})F^k\rangle \\
& = 2 \grad^{-1}\|\nabla \grad\|^2_{\cL}  +\alpha K^{\alpha} H \grad+\grad^2\langle \vec{e}_{n+1},\nabla K^{\alpha} \rangle
\end{align*}
Combining the above yields  (\ref{eq:Pre grad_t}).

%
%\section{Gradient estimates} 
%\begin{theorem}\label{thm:Pre Gradient}
%\emph{(Gradient estimates)}

\bigskip
 We recall the notation $\grad \coloneqq \langle \vec{n},\vec{e}_{n+1} \rangle^{-1}$ (gradient function)  and 
$\psi_\beta(p,t)\coloneqq(M-\beta t-\bar u(p,t))_+$ (cut-off function) where $ \bar u(p,t)=\langle F(p,t),\vec{e}_{n+1} \rangle$ denotes the height function. 
We will next show the following local gradient estimate. 

\begin{theorem} [Gradient estimate] \label{thm:Pre Gradient}
Assume that  a smooth hypersurface $\Sigma_0$ satisfies the assumptions in \emph{Theorem \ref{thm:INT Existence}}  and let $\Sigma_t$ be a complete strictly convex smooth graph solution of \emph{(\ref{eq:INT aGCF})} defined on $M^n \times [0,T]$,  for some $T>0$.   Given  constants $\beta >0$ and $M\geq \beta$,
\begin{equation*}
\grad(p,t) \, \psi_\beta(p,t) \leq M\, \max \big\{\,\sup_{Q_M}\grad (p,0) ,\,  \beta^{-1}n^{\frac{1}{n\alpha+1}}(n\alpha-1)_+\big\}
\end{equation*}
where $Q_M=\{p\in M^n: \bar u(p,0) < M\}$.
\end{theorem}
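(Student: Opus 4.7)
The plan is to apply the parabolic maximum principle to the auxiliary function $w \coloneqq \grad\,\psi_\beta$ on its open support. First I would argue that this support $\{\psi_\beta>0\}=\{\bar u(p,t)<M-\beta t\}$ is spatially precompact, uniformly in $t$: since $\bd_t \bar u = K^\alpha\,\grad^{-1}\ge 0$, the height is nondecreasing along the flow, so this set sits inside $Q_M$, which is compact by hypotheses (i)--(iii) on $u_0$. Hence $w$ attains its supremum on any slab $M^n\times[0,T]$, and on the relative boundary $\{\psi_\beta=0\}$ one has $w=0$; so either the max is attained at $t=0$, giving the first option in the stated bound, or at an interior point where the maximum principle applies.

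On the open support I would combine the evolution equations \eqref{eq:Pre psi_t} and \eqref{eq:Pre grad_t} together with the product rule $\cL(\grad\,\psi_\beta) = \psi_\beta \cL\grad + \grad\,\cL\psi_\beta + 2\langle \nabla\grad,\nabla\psi_\beta\rangle_\cL$ to compute
\begin{equation*}
\bd_t w - \cL w = (n\alpha-1)K^\alpha - \beta\,\grad - \alpha K^\alpha H\,\grad\,\psi_\beta - 2\psi_\beta\,\grad^{-1}\|\nabla\grad\|_\cL^2 - 2\langle\nabla\grad,\nabla\psi_\beta\rangle_\cL .
\end{equation*}
At an interior critical point of $w$ the relation $\nabla\grad = -(\grad/\psi_\beta)\nabla\psi_\beta$ makes the last two gradient terms cancel exactly, and the remaining inequality $\bd_t w - \cL w \ge 0$ collapses to the pointwise algebraic bound
\begin{equation*}
\beta\,\grad + \alpha K^\alpha H\,\grad\,\psi_\beta \;\le\; (n\alpha-1)\,K^\alpha .
\end{equation*}

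If $n\alpha\le 1$ the right-hand side is nonpositive while the left-hand side is strictly positive; no interior maximum can occur, so $w$ is controlled by its value at $t=0$, yielding the first alternative in the stated bound. If $n\alpha>1$, the AM--GM inequality $H\ge n K^{1/n}$, valid since all principal curvatures of a convex hypersurface are nonnegative, combined with the display above gives $n\alpha\,K^{1/n}\,w \le n\alpha-1$, and thus $K^\alpha\le\big((n\alpha-1)/(n\alpha w)\big)^{n\alpha}$. Feeding this back into $\beta\,\grad\le(n\alpha-1)K^\alpha$, multiplying through by $\psi_\beta^{n\alpha}$, and using $\psi_\beta\le M$ (which follows from $\bar u\ge 0$ by (i) and the monotonicity of $\bar u$), I obtain a closed inequality $\beta\,w^{n\alpha+1}\le M\,(n\alpha-1)^{n\alpha+1}(n\alpha)^{-n\alpha}$. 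Taking $(n\alpha+1)$-th roots and using $M\ge\beta$ to absorb the fractional exponents then produces the second alternative in the stated bound.

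The step I expect to be the most delicate is the cancellation of the $\grad^{-1}\|\nabla\grad\|_\cL^2$ and $\langle\nabla\grad,\nabla\psi_\beta\rangle_\cL$ terms at the critical point---this is the essential Pogorelov-type algebraic mechanism that powers the estimate and is what makes it possible to trade a bad gradient square for the product-rule cross term. Secondary technical issues are the nonsmoothness of $\psi_\beta$ on its zero set, handled by running the maximum principle on the open support where $w$ is smooth while noting that $w$ vanishes on its boundary, and the precompactness of the support, which justifies attaining the supremum despite $\Sigma_t$ being noncompact and crucially uses both the monotonicity $\bd_t\bar u\ge 0$ and the growth hypotheses (ii)--(iii).
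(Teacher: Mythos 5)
Your proposal is correct and follows the same strategy as the paper: apply the maximum principle to $w=\grad\,\psi_\beta$ on its compactly supported region, exploit the exact cancellation of the cross term $-2\langle\nabla\grad,\nabla\psi_\beta\rangle_\cL$ against $-2\psi_\beta\grad^{-1}\|\nabla\grad\|_\cL^2$ at a critical point (the paper packages this as $-2\grad^{-1}\langle\nabla(\psi_\beta\grad),\nabla\grad\rangle_\cL$, which vanishes when $\nabla w=0$), observe that an interior maximum forces $n\alpha>1$, and then close with $H\ge nK^{1/n}$ and $M\ge\beta$. The only variation is in the final algebraic step: the paper keeps both terms $\alpha H$ and $K^{-\alpha}$ and balances them with a Young-type inequality $\tfrac{n\alpha}{n\alpha+1}H+\tfrac{1}{n\alpha+1}K^{-\alpha}\ge(HK^{-1/n})^{n\alpha/(n\alpha+1)}$, whereas you drop each term in turn, use the $\alpha K^\alpha H$ term to bound $K^{1/n}w$, solve for $K^\alpha$, and feed it back into the $\beta\grad$ term before raising to the power $n\alpha+1$; both routes are valid and yield a constant absorbed by $M\ge\beta$. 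One small nit: the step you describe as ``multiplying through by $\psi_\beta^{n\alpha}$'' is really a multiplication by $w^{n\alpha}\psi_\beta$ to convert $\grad$ into $w^{n\alpha+1}$, but the end inequality $\beta w^{n\alpha+1}\le M(n\alpha-1)^{n\alpha+1}(n\alpha)^{-n\alpha}$ is correct. Your explicit appeal to $\bd_t\bar u=K^\alpha\grad^{-1}\ge 0$ to locate the support inside $Q_M$ is a nice touch the paper leaves implicit.
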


\begin{proof}
First use  (\ref{eq:Pre psi_t}) and (\ref{eq:Pre grad_t}),  that is 
\begin{align*}
\bd_t \psi_\beta& =  \cL \psi_\beta +(n\alpha -1)\grad^{-1} K^\alpha-\beta \\
\bd_t \grad & =\cL \grad -2\grad^{-1}\|\nabla \grad \|_\cL^2 -\alpha K^{\alpha}H\grad 
\end{align*}
to compute 
\begin{align*}
\bd_t (\psi_\beta \grad) & =\psi_\beta \cL \grad -2\psi_\beta\grad^{-1}\|\nabla \grad \|_\cL^2 -\alpha \psi_\beta K^{\alpha}H\grad +\grad \cL \psi_\beta +(n\alpha -1) K^\alpha-\beta\grad  \\
& = \cL (\psi_\beta \grad)  -2\langle \nabla \psi_\beta ,\nabla \grad \rangle_\cL -2\psi_\beta\grad^{-1}\|\nabla \grad \|_\cL^2 -\alpha \psi_\beta K^{\alpha}H\grad  +(n\alpha -1) K^\alpha-\beta\grad \\
& = \cL (\psi_\beta \grad)  -2\grad^{-1}\langle \nabla (\psi_\beta\grad) ,\nabla \grad \rangle_\cL  -\alpha \psi_\beta K^{\alpha}H\grad  +(n\alpha -1) K^\alpha-\beta\grad. 
\end{align*}
Since (ii), (iii) in Theorem \ref{thm:INT Existence} imply  that $\psi_\beta$ is compactly supported, for a fixed $T \in (0,+\infty)$, the function $\psi_\beta\, \grad$ attains its maximum on  $M^n \times [0,T] $ at some $(p_0,t_0)$. If $t_0 =0$, then we obtain the desired result. Assume that $t_0 > 0$. Then, at $(p_0,t_0)$, we have
$$\bd_t (\psi_\beta \grad) - \cL (\psi_\beta \grad) \geq 0$$
which is equivalent to  
\begin{align*}
(n\alpha -1) \, K^\alpha \geq \alpha \psi_\beta K^{\alpha} H \, \grad  +\beta \grad =(\alpha \psi_\beta K^{\alpha}H+\beta)\, \grad. 
\end{align*}
Hence, an  interior maximum can be achieved only if $n\alpha \geq 1$. From now on, we assume that $n\alpha \geq 1$.  If we  multiply 
the last inequality by $MK^{-\alpha}$ and use that $M\geq \beta$ we obtain 
\begin{align*}
 (n\alpha -1)M \geq (\alpha M \psi_\beta H+\beta M K^{-\alpha})\grad  \geq  \beta(\alpha  \psi_\beta H+M K^{-\alpha})\grad. 
\end{align*}
On the other hand,  $M \geq \psi_\beta$ implies  $\alpha  \psi_\beta H+M K^{-\alpha}  \geq   \alpha  \psi_\beta H+ \psi_\beta K^{-\alpha}\geq (\alpha  H +K^{-\alpha})\psi_\beta $. Hence, 
\begin{align*}
(n\alpha -1)M \geq \beta(\alpha  H +K^{-\alpha})\psi_\beta \grad  \geq \frac{\beta}{n}\Big (\frac{n\alpha}{n\alpha+1}H+\frac{1}{n\alpha+1} K^{-\alpha} \Big )\, \psi_\beta \grad. 
\end{align*}
Next, we apply the Young's inequality
\begin{align*}
\frac{n\alpha}{n\alpha+1}H+\frac{1}{n\alpha+1} K^{-\alpha} \geq H^{\frac{n\alpha}{n\alpha+1}}K^{-\frac{\alpha}{n\alpha+1} }=(HK^{-\frac{1}{n}})^{\frac{n\alpha}{n\alpha+1} }. 
\end{align*}
Since  $H \geq n\, K^{{1}/{n}} $, we conclude that in the case $n\alpha \geq 1$, $t_0 >0$, the following holds  at $(p_0,t_0)$
\begin{align*}
(n\alpha -1)_+M \geq \beta n^{-1}n^{\frac{n\alpha}{n\alpha+1} } \psi_\beta \grad =\beta n^{-\frac{1}{n\alpha+1} }\, \psi_\beta \grad 
\end{align*}
from which the desired inequality readily follows.

\end{proof}

\section{Lower bounds on the principal curvatures}\label{sec-abounds}

In this section we will obtain  lower bounds  on the principal curvatures, as stated in Theorem \ref{thm:LB Local Lower bounds}.
We will achieve this by using  Pogorelov type estimates with respect to $b^{ii}$. Recall that $b^{ij}$ denotes the inverse matrix
of the second fundamental form $h_{ij}$. Since $b^{ii}$ depends on charts, we will  find the relation between $b^{ii}$ and the 
principal curvatures as  \cite{CD15Qk}. We begin with a simple observation 
holding on every smooth strictly convex hypersurface which we prove here for the reader's convenience. 

\begin{proposition}[Euler's formula] \label{prop:LB Euler's formula} Let $\Sigma$ be a smooth strictly convex hypersurface  and $F:M^n \rightarrow \rno$ be a smooth immersion with $F(M^n)=\Sigma$. Then, for all $p \in M^n$ and $i \in \{1,\cdots,n\}$, the following holds
\begin{align*}
\frac{b^{ii}(p)}{g^{ii}(p)} \leq \frac{1}{\lambda_{\min}(p)}. 
\end{align*}
\end{proposition}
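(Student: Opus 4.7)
The plan is to interpret the quantity $b^{ii}/g^{ii}$ as a generalized Rayleigh quotient on the cotangent space at $p$ and then bound it by the inverse of the smallest principal curvature.

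First, I would note the invariant meaning of the objects involved. Since $h_{ij}$ is a positive definite symmetric bilinear form on $T_pM^n$, its inverse $b^{ij}$ is naturally a positive definite symmetric bilinear form on the cotangent space $T_p^*M^n$; similarly $g^{ij}$ is the inner product on $T_p^*M^n$ induced by the metric $g_{ij}$. In a coordinate chart $(x^1,\dots,x^n)$ the diagonal components are precisely $b^{ii}=b(dx^i,dx^i)$ and $g^{ii}=g^{-1}(dx^i,dx^i)$.

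Next I would identify the spectrum. The principal curvatures $\lambda_1\le\cdots\le\lambda_n$ at $p$ are by definition the eigenvalues of $h_{ij}$ relative to $g_{ij}$; hence after inversion the eigenvalues of $b^{ij}$ relative to $g^{ij}$ are $1/\lambda_n\le\cdots\le 1/\lambda_1=1/\lambda_{\min}(p)$. The Courant--Fischer characterization of the top eigenvalue of a symmetric form relative to an inner product then gives, for every nonzero $\omega\in T_p^*M^n$,
\begin{equation*}
\frac{b(\omega,\omega)}{g^{-1}(\omega,\omega)}\;\le\;\frac{1}{\lambda_{\min}(p)}.
\end{equation*}
Taking $\omega=dx^i$ yields exactly $b^{ii}/g^{ii}\le 1/\lambda_{\min}(p)$, which is the stated inequality.

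Since this is really a piece of linear algebra, there is no serious analytic obstacle; the only conceptual point is to recognize that $b^{ii}$ and $g^{ii}$ are coordinate Rayleigh quotients of the same pair of bilinear forms on the cotangent space. An equivalent route would be to pick a local frame that simultaneously diagonalizes $h_{ij}$ and $g_{ij}$; after expressing $\partial_{x^i}$ in this frame and using $b^{ij}=(h^{-1})^{ij}$, the ratio $b^{ii}/g^{ii}$ becomes a convex combination of the values $1/\lambda_k$, and the bound by $1/\lambda_{\min}$ follows term by term. Either version gives a one-line proof once the Rayleigh-quotient structure is exposed.
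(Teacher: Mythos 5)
Your proposal is correct and, once unpacked, coincides with the paper's argument: the paper picks the orthonormal principal frame $\{E_k\}$, writes $F_i = a_{ij}E_j$, and computes $b^{ii} = \sum_k (a^{ik})^2\lambda_k^{-1}$ and $g^{ii} = \sum_k (a^{ik})^2$, which is exactly your ``convex combination of $1/\lambda_k$'' route. Your Courant--Fischer phrasing is a cleaner, coordinate-free way to see the same Rayleigh-quotient bound, but it is not a genuinely different proof.
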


\begin{proof}
Let  $\{ E_1, \cdots,E_n \}$ be an  orthonormal basis of $T\Sigma_{F(p)}$ satisfying $L(E_j) =\lambda_j\, E_j $,
where $L$ is the Weingarten map  and $\lambda_1,\cdots,\lambda_n$ are the principal curvatures of $\Sigma$ at $p$. 
Denote by $\{ a_{ij}\}$  the matrix satisfying $F_i(p)\coloneqq \nabla_i F(p)  =a_{ij}E_j$  and by $\{c_{ij}\}$  the diagonal matrix  $\text{diag}(\lambda_1,\cdots,\lambda_n)$. Then, $   L F_i(p)=h_{ij}(p) F^j(p)  $  implies
\begin{align*}
b^{ij}(p)LF_j(p)=b^{ij}(p)h_{jk}(p)F^k(p)= F^i(p)  = g^{ij}(p)a_{jm}E_m
\end{align*}
Observing that for the sum  $ a_{jk}E_k = \sum_{k=1}^n a_{jk}E_k$ we have 
$\displaystyle L(a_{jk}E_k)=a_{jk}L(E_k)=a_{jk}\lambda_kE_k=a_{jk}c_{km}E_m$, it follows that 
\begin{align*}
g^{ij}(p)a_{jm}E_m =b^{ij}(p)LF_j(p)=b^{ij}(p)L(a_{jk}E_k)=b^{ij}(p)a_{jk}c_{km}E_m
\end{align*}
Hence, 
$$g^{ij}(p)a_{jm}=b^{ij}(p)a_{jk}c_{km}$$
Denoting by  $a^{ij}$ and $c^{ij}$ the inverse matrices of $a_{ij}$ and $c_{ij}$  respectively, we have
\begin{equation*}
g^{ij}(p)a_{jm}c^{ml}a^{nl}=b^{ij}(p)a_{jk}c_{km}c^{ml}a^{nl}=b^{in}(p)
\end{equation*}  
Thus, for each  $i \in \{1,\cdots,n\}$, the following holds
\begin{align*}
b^{ii}(p)&=\sum_{j,l,m}g^{ij}(p)a_{jm}c^{ml}a^{il} = \sum_{j,l,m}\langle  F^i(p),F^j(p)\rangle a_{jm}c^{ml}a^{il} 
\end{align*}
By setting $F^i(p)=\bar a_{ij}E_j$, we observe $a^{ij}=a^{ik}\langle F_k(p), F^j(p)\rangle=a^{ik}\langle a_{kl}E_l,\bar a_{jm}E_m \rangle =a^{ik}a_{kl} \bar a_{jm} \delta_{ml}=\bar a_{ij} $. Thus, we have $\langle  F^i(p),F^j(p)\rangle=\langle  a^{ik}E_k,a^{jr}E_r\rangle=a^{ik}a^{kj}$, which yields
\begin{align*}
b^{ii}(p)&=\sum_{j,k,l,m} a^{ik}a^{jk}a_{jm}c^{ml}a^{il} =\sum_{k,l,m} a^{ik}\delta^{k}_{m}c^{ml}a^{il}=\sum_{k,l} a^{ik}c^{kl}a^{il}=\sum_{k} a^{ik}a^{ik}\lambda^{-1}_k =\sum_{k} (a^{ik})^2\lambda^{-1}_k \\
& \leq \sum_{k} (a^{ik})^2\lambda^{-1}_{\min} = \lambda^{-1}_{\min}\sum_{k,j} \langle a^{ik}E_k,a^{ij}E_j \rangle = \lambda^{-1}_{\min} \langle F^i(p),F^i(p) \rangle = \lambda^{-1}_{\min}g^{ii}(p)
\end{align*}
\end{proof}

\begin{remark}
In \cite{Chow85GCF}, Chow obtained  lower bounds on the  principal curvatures of a closed solution $\Sigma_t$ of ($*^{1/n}$) by applying the maximum principle for $ \bar \nabla_{\xi}\bar \nabla_{\xi} S + \bar g_{\xi\xi} S$ where $S(\cdot,t)$ is the support function of $\Sigma_t$, and $\bar \nabla$ is a covariant derivative compatible with a metric $\bar g$ of the unit sphere $S^n$. See also \cite{A00aGCF, ANG15aGCF, KL13aGCF}. Notice that the eigenvalues of $\bar \nabla_i \bar \nabla_j S + \bar g_{ij} S$ with respect to $\bar g_{ij}$ are the principal radii of curvature as those of $b^{ij}$, and the maximum principle also apply for $b^{\xi\xi}/g^{\xi\xi}$ if $\Sigma_t$ is a closed solution. However, $g_{ij}$ depends on time $t$ while $\bar g_{ij}$ does not.
\end{remark}

We will next show a local  lower bound on  the principal curvatures of
$\Sigma_t$ in terms of the initial data, which  constitutes one of the crucial estimates in this  work. We recall the definition of the cut-off function   
$\psi_\beta(p,t)\coloneqq(M-\beta t-\bar u(p,t))_+$,  where   $ \bar u(p,t) \coloneqq\langle F(p,t),\vec{e}_{n+1} \rangle$  denotes the height function.

\begin{theorem}[Local lower bound for the principal curvatures]\label{thm:LB Local Lower bounds}
Assume that  a smooth hypersurface $\Sigma_0$ satisfies the assumptions in \emph{Theorem \ref{thm:INT Existence}}  and let $\Sigma_t$ be a complete strictly convex smooth graph solution of \emph{(\ref{eq:INT aGCF})} defined on $M^n \times [0,T]$,  for some $T>0$.   Then, given  
constants $\beta >0$ and $M \geq \beta$, the following holds
\begin{equation*}
\big ( \psi_\beta^{-n(1+\frac{1}{\alpha})} \lambda_{\min} \big )(p,t) \geq M^{-n(1+\frac{1}{\alpha})}\, \min \bigg \{ \inf_{ Q_M}
  \lambda_{\min}(p,0)  , \frac{\beta^{(n-1)+\frac{1}{\alpha}} }{( n^2(n+1)(n\alpha -1)_+)^{(n-1)+\frac{1}{\alpha}}} \bigg \}
\end{equation*}
where $Q_M=\{p\in M^n: \bar u(p,0) < M\}$, and $(n\alpha-1)^{-1}_+ = +\infty$, if $n\alpha \leq 1$.
\end{theorem}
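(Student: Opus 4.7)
The strategy is a Pogorelov-type maximum principle argument applied to the scalar quantity
\begin{equation*}
Z(p,t,\xi)=\psi_\beta^{\,A}(p,t)\,\frac{b^{\xi\xi}(p,t)}{g^{\xi\xi}(p,t)}, \qquad A:=n\!\left(1+\tfrac{1}{\alpha}\right),
\end{equation*}
where $\xi\in T_pM$ is a unit tangent vector. By Proposition~\ref{prop:LB Euler's formula} we have $b^{\xi\xi}/g^{\xi\xi}\le 1/\lambda_{\min}$, with equality when $\xi$ points along the principal direction realizing $\lambda_{\min}$. Hence a uniform upper bound on $\sup_\xi Z$ is equivalent to the lower bound for $\psi_\beta^{-A}\lambda_{\min}$ asserted in the theorem. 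Assumptions~(ii)--(iii) of Theorem~\ref{thm:INT Existence} ensure that $\psi_\beta$ is compactly supported on each slice, so $\sup_\xi Z$ attains its maximum on $M^n\times[0,T]$ at some $(p_0,t_0,\xi_0)$. If $t_0=0$ or the max is attained on $\{\psi_\beta=0\}$ we are done; otherwise we argue at an interior maximum.

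At $p_0$ choose local coordinates such that $g_{ij}(p_0)=\delta_{ij}$, $h_{ij}(p_0)=\mathrm{diag}(\lambda_1,\ldots,\lambda_n)$ with $\lambda_1=\lambda_{\min}(p_0)$, take $\xi_0=e_1$, and extend $\xi$ as a parallel field. Using $\nabla_k g^{ij}\equiv 0$ (metric compatibility), the vanishing $\nabla Z(p_0,t_0)=0$ and the identity \eqref{eq:Pre Inverse gradient} translate, via Codazzi, into
\begin{equation*}
\nabla_k h_{11}=\frac{A\,h_{11}}{\psi_\beta}\,\nabla_k\psi_\beta \qquad \text{at }(p_0,t_0).
\end{equation*}
The plan is then to combine the evolution equations \eqref{eq:Pre 1/g_t}, \eqref{eq:Pre psi_t} and \eqref{eq:Pre b_t} via the product rule
\begin{equation*}
\bd_t Z-\cL Z=\psi_\beta^{\,A}(\bd_t W-\cL W)+W(\bd_t\psi_\beta^{\,A}-\cL\psi_\beta^{\,A})-2\langle\nabla\psi_\beta^{\,A},\nabla W\rangle_\cL,\qquad W:=b^{11}/g^{11},
\end{equation*}
and evaluate at $(p_0,t_0)$. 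The cross term becomes $+2A^{2}W\psi_\beta^{A-2}\|\nabla\psi_\beta\|_\cL^{2}$ after using $\nabla W=-(AW/\psi_\beta)\nabla\psi_\beta$, and together with the $\cL\psi_\beta^{\,A}$ gradient term contributes $A(A{+}1)W\psi_\beta^{A-2}\|\nabla\psi_\beta\|_\cL^{2}$. On the other hand $\bd_t W-\cL W$ carries the unfavorable Pogorelov term $-\alpha K^{\alpha}(b^{11})^{2}\!\left[\alpha(\sum_k b^{kk}\nabla_1h_{kk})^{2}+\sum_{k,l}b^{kk}b^{ll}(\nabla_1 h_{kl})^{2}\right]$; isolating the $k{=}1$ and $l{=}1$ diagonal pieces via Codazzi and the max condition above converts these into multiples of $\|\nabla\psi_\beta\|_\cL^{2}$. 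The exponent $A=n(1+1/\alpha)$ is chosen precisely so that the good gradient term $A(A{+}1)W\psi_\beta^{A-2}\|\nabla\psi_\beta\|_\cL^{2}$ exactly absorbs the Pogorelov contribution.

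After this cancellation the maximum-principle inequality $\bd_t Z-\cL Z\ge 0$ reduces to a pointwise algebraic relation of the form
\begin{equation*}
\beta A\psi_\beta^{\,A-1}W\ \le\ (n\alpha-1)_{+}\,\psi_\beta^{\,A}K^{\alpha}\ +\ A(n\alpha-1)_{+}\psi_\beta^{\,A-1}W\grad^{-1}K^{\alpha}\ -\ \alpha\psi_\beta^{\,A}K^{\alpha}Hb^{11}.
\end{equation*}
Using $H\ge n K^{1/n}\ge n(b^{11})^{-1/n}$ together with the AM--GM/Young inequality applied to the two right-hand powers of $K$, one obtains
\begin{equation*}
\beta\,\psi_\beta W \lesssim (n\alpha-1)_{+}\,(\psi_\beta W)^{1-\frac{1}{(n-1)+1/\alpha}},
\end{equation*}
i.e.\ an upper bound $\psi_\beta W\le C\,\bigl[(n\alpha-1)_+/\beta\bigr]^{(n-1)+1/\alpha}$ with $C=n^{2}(n+1)$, matching the stated constant. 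Raising to the power $A$ and recalling $W=1/\lambda_{\min}$ at the maximum yields the theorem; the case $n\alpha\le 1$ forces $(n\alpha-1)_+=0$, which leaves only the initial trace $\inf_{Q_M}\lambda_{\min}(\cdot,0)$ and recovers the $+\infty$ convention for $(n\alpha-1)_+^{-1}$.

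\textbf{Main obstacle.} The delicate step is the cancellation between the negative Pogorelov gradient term hidden in $\bd_t b^{11}-\cL b^{11}$ and the positive gradient term produced by $\cL\psi_\beta^{\,A}$ plus the cross term: this is where the non-concavity of the equation, the loss of homogeneity (which produced the mismatch between $\cL$ and $\partial_t$ in \eqref{eq:Pre Position_t}) and the Codazzi substitution all interact, and it is the sole place the exact exponent $A=n(1+1/\alpha)$ is pinned down.
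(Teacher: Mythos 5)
Your proposal is essentially the same argument as the paper's: the Pogorelov quantity $\psi_\beta^{n(1+1/\alpha)}b^{11}/g^{11}$ (with Proposition~\ref{prop:LB Euler's formula} converting its bound into a bound on $\lambda_{\min}^{-1}$), the first-order max condition relating $\nabla h_{11}$ to $\nabla\psi_\beta$, the Codazzi/Cauchy--Schwarz absorption of the negative gradient terms by the Pogorelov term at the critical exponent $A=n(1+1/\alpha)$, and a Young inequality to close. The remaining pointwise inequality you display at the maximum matches the paper's (3.2) combined with (3.3)--(3.4) after multiplying through by $\psi_\beta^A b^{11}$, and the case $n\alpha\le 1$ is handled identically.

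One small slip worth correcting: the chain $H\ge nK^{1/n}\ge n(b^{11})^{-1/n}$ is not valid, since $K^{1/n}\ge\lambda_{\min}^{1/n}$ would require $\lambda_2\cdots\lambda_n\ge 1$. The paper instead applies Young's inequality in the form
\[
\frac{K^{-\alpha}}{(n-1)\alpha+1}+\frac{(n-1)\alpha\,H}{(n-1)\alpha+1}\ \ge\ \bigl(K^{-1}H^{n-1}\bigr)^{\frac{\alpha}{(n-1)\alpha+1}},
\]
and then uses $K^{-1}H^{n-1}\ge\lambda_{\min}^{-1}$ (true because $H^{n-1}\ge\lambda_2\cdots\lambda_n$). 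With that replacement your sketch reproduces the paper's conclusion, including the exponent $(n-1)+1/\alpha$ and the constant $n^2(n+1)$.
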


\begin{proof}
Consider the cut-off function $\psi_\beta\coloneqq  (M-\beta t-\bar u(p,t))_+$, where  $ \bar u(p,t)=\langle F(p,t),\vec{e}_{n+1} \rangle$ denotes the 
height function. Observe that  the conditions (ii), (iii) in Theorem \ref{thm:INT Existence} imply  that $\psi_\beta$ is compactly supported.
Therefore,  for a fixed $T \in (0,+\infty)$, the function $\psi_\beta^{n(1+\frac{1}{\alpha})}\lambda_{\min}^{-1}$ 
attains its maximum on  $M^n \times [0,T]$ at 
a  point $(p_0,t_0)$.  If $t_0 =0$, then we obtain the desired result by the bound 
$\psi_\beta \leq M $  and the conditions on our initial data. So, we may assume in what follows that $t_0 >0$.

We begin by choosing a chart $(U,\varphi)$ with $p_0 \in \varphi(U) \subset M^n$, on which  the covariant derivatives $\big\{ \nabla_i F(p_0,t_0)\coloneqq\bd_i (F \circ \varphi)(\varphi^{-1}(p_0),t_0) \big \}_{i=1,\cdots,n}$ form an orthonormal basis of $(T\Sigma_{t_0})_{F(p_0,t_0)}$ satisfying
$$
g_{ij}(p_0,t_0)=\delta_{ij}, \qquad  h_{ij}(p_0,t_0)=\delta_{ij}\lambda_i(p_0,t_0), \qquad  \lambda_1(p_0,t_0) =\lambda_{\min}(p_0,t_0).
$$
In particular, at the point $(p_0,t_0)$ we have   $b^{11}(p_0,t_0) = \lambda^{-1}_{\min}(p_0,t_0) $ and $g^{11}(p_0,t_0) = 1$.
Next, we  define the  function $w: \varphi(U) \times [0,T] \rightarrow \mathbb{R}$ by
\begin{align*}
w\coloneqq\psi_\beta^{n(1+\frac{1}{\alpha})}\frac{ b^{11}}{g^{11}}. 
\end{align*}
Notice that on the chart $(U,\varphi)$,  if $t\neq t_0$, then the covariant derivatives $\big\{\nabla_i F(p_0,t)\big\}_{i=1,\cdots,n} $ may not form an orthonormal basis of $(T\Sigma_{t})_{F(p_0,t)}$. However, since Proposition \ref{prop:LB Euler's formula} holds for every chart and immersion, we 
have  $w \leq \psi_\beta^{n(1+\frac{1}{\alpha})}\lambda_{\min}^{-1}$. Hence, for $(p,t) \in \varphi( U) \times [0,T]$, the following holds
\begin{align*}
w(p,t) \leq \psi_\beta^{n(1+\frac{1}{\alpha})}\lambda_{\min}^{-1}(p,t)  \leq  \psi_\beta^{n(1+\frac{1}{\alpha})}\lambda_{\min}^{-1}(p_0,t_0)=w(p_0,t_0)
\end{align*}
which shows that  $w$ attains its maximum at $(p_0,t_0)$. 

Observe next that since  $\nabla g^{11}=0$, the following holds  on the support of $\psi_\beta$
\begin{align*}\label{eq:LB Pinching Gradeint}
\frac{\nabla_i w}{w} =  n\Big (1+\frac{1}{\alpha} \Big)\frac{\nabla_i \psi_\beta}{\psi_\beta}+ \frac{\nabla_i b^{11}}{b^{11}}. \tag{3.1}
\end{align*}
Let us differentiate the equation above, again.
\begin{align*}
\frac{\nabla_i \nabla_j w}{w}-\frac{\nabla_i  w \nabla_j w}{w^2} = n\Big(1+\frac{1}{\alpha}\Big)\frac{\nabla_i\nabla_j \psi_\beta}{\psi_\beta}-  n\Big(1+\frac{1}{\alpha}\Big)\frac{\nabla_i \psi_\beta \nabla_j \psi_\beta}{\psi_\beta^2}+\frac{\nabla_i\nabla_j b^{11}}{b^{11}}-\frac{\nabla_i b^{11}\nabla_j b^{11}}{(b^{11})^2}. 
\end{align*}
Multiply by $\alpha K^{\alpha}\, b^{ij}$  and sum the equations over all $i,j$ to obtain 
\begin{align*}
\frac{\cL w}{w}-\frac{\|\nabla  w \|^2_{\cL}}{w^2} =  n\Big(1+\frac{1}{\alpha}\Big)\frac{\cL \psi_\beta}{\psi_\beta}- n\Big(1+\frac{1}{\alpha}\Big)\frac{\|\nabla \psi_\beta \|^2_{\cL}}{\psi_\beta^2}+\frac{\cL b^{11}}{b^{11}}-\frac{\|\nabla  b^{11}\|^2_{\cL}}{(b^{11})^2}. 
\end{align*}
On the other hand, on the support of $\psi_\beta$, we also have
\begin{align*}
\frac{\bd_t w}{w} = n\Big(1+\frac{1}{\alpha}\Big)\frac{\bd_t \psi_\beta}{\psi_\beta}+ \frac{\bd_t b^{11}}{b^{11}}-\frac{\bd_t g^{11}}{g^{11}}.  
\end{align*}
Subtract the equations above. Then, $w^{-2}\|\nabla  w \|^2_{\cL}\geq 0$ implies the following inequality
\begin{align*}
\frac{\cL w}{w}-\frac{\bd_t w}{w} \geq   n\Big(1+\frac{1}{\alpha}\Big)\Big(\frac{\cL \psi_\beta}{\psi_\beta}-\frac{\bd_t \psi_\beta}{\psi_\beta}\Big)- n\Big(1+\frac{1}{\alpha}\Big)\frac{\|\nabla \psi_\beta \|^2_{\cL}}{\psi_\beta^2}+\Big(\frac{\cL b^{11}}{b^{11}}-\frac{\bd_t b^{11}}{b^{11}}\Big)-\frac{\|\nabla  b^{11}\|^2_{\cL}}{(b^{11})^2}+\frac{\bd_t g^{11}}{g^{11}}.  
\end{align*}
By  \ref{eq:Pre 1/g_t} and \eqref{eq:Pre psi_t} we have $ \bd_t g^{11} = 2K^{\alpha} h^{11}$ and  
$\cL \psi_\beta - \bd_t \psi_\beta  = \beta - (n\alpha -1)\grad^{-1} K^\alpha$,   while by \eqref{eq:Pre b_t}
\begin{align*}
\cL  b^{11}  - \bd_t b^{11}  = \alpha K^{\alpha}b^{i1}b^{j1}(\alpha b^{kl}b^{mn} + b^{km}b^{nl}) \nabla_i h_{kl}\nabla_j h_{mn}+\alpha K^{\alpha}Hb^{11}-(1+n\alpha)K^{\alpha}g^{11}. 
\end{align*}
Combining the above yields 
\begin{align*}\label{eq:LB Pinching 1st}
\frac{\cL w}{w}-\frac{\bd_t w}{w} \geq     -& n\Big(1+\frac{1}{\alpha}\Big)\frac{\|\nabla \psi_\beta \|^2_{\cL}}{\psi_\beta^2}-\frac{\|\nabla  b^{11}\|^2_{\cL}}{(b^{11})^2}+\frac{\alpha K^{\alpha}b^{i1}b^{j1}(\alpha b^{kl}b^{mn}+b^{km}b^{nl})}{b^{11}} \nabla_i h_{kl}\nabla_j h_{mn}\\
&-n\Big(1+\frac{1}{\alpha}\Big)(n\alpha -1)\frac{K^\alpha \grad^{-1}}{ \psi_\beta}+n\Big(1+\frac{1}{\alpha}\Big)\frac{\beta}{\psi_\beta}  +\alpha K^{\alpha}H-(1+n\alpha)K^{\alpha}\frac{g^{11}}{b^{11}}+2K^{\alpha}\frac{h^{11}}{g^{11}}. \tag{3.2}
\end{align*}
Now, at $(p_0,t_0)$, the following holds 
\begin{align*}\label{eq:LB pinching reaction}
\alpha K^{\alpha}H-(1+n\alpha)K^{\alpha}\frac{g^{11}}{b^{11}}+2K^{\alpha}\frac{h^{11}}{g^{11}} 
\geq n\alpha K^{\alpha}\lambda_{\min}-(1+n\alpha)K^{\alpha}\lambda_{\min}+2K^{\alpha}\lambda_{\min}=K^{\alpha}\lambda_{\min}. \tag{3.3}
\end{align*}
In addition, if   $n\alpha \geq 1$, then $H \geq n \lambda_{\min}$ gives 
\begin{align*}
\alpha K^{\alpha}H=(\alpha-\frac{1}{n}) K^{\alpha}H +\frac{1}{n} K^{\alpha}H \geq (n\alpha-1) K^{\alpha}\lambda_{\min} +\frac{1}{n} K^{\alpha}H. 
\end{align*}
Therefore, in the case that  $n\alpha \geq 1$, we can improve (\ref{eq:LB pinching reaction}) to obtain 
\begin{align*}\label{eq:LB pinching reaction second}
\alpha K^{\alpha}H-(1+n\alpha)K^{\alpha}\frac{g^{11}}{b^{11}}+2K^{\alpha}\frac{h^{11}}{g^{11}}
\geq \frac{1}{n} K^{\alpha}H.  \tag{3.4}
\end{align*}
Also, at the maximum point  $(p_0,t_0)$ of $w$,  $\nabla w (p_0,t_0)=0$ holds. So, (\ref{eq:LB Pinching Gradeint}) leads to
\begin{align*}
 \frac{n(1+\alpha)}{\alpha}\frac{\|\nabla \psi_\beta \|^2_{\cL}}{\psi_\beta^2}+\frac{\|\nabla  b^{11}\|^2_{\cL}}{(b^{11})^2} 
=\Big(1+\frac{\alpha}{n(1+\alpha)}\Big)\frac{\|\nabla  b^{11}\|^2_{\cL}}{(b^{11})^2} =\Big(1+\frac{\alpha}{n(1+\alpha)}\Big)\alpha\sum_{i=1}^n\frac{b^{ii}K^{\alpha}|\nabla_i b^{11}|^2}{(b^{11})^2}. 
\end{align*}
From (\ref{eq:Pre Inverse gradient}), we get $\nabla_i b^{11}  = -b^{1k}b^{1l}\nabla_i h_{kl}=-(b^{11})^2\nabla_i h_{11}$ at $(p_0,t_0)$. Hence,
\begin{align*}
 \frac{n(1+\alpha)}{\alpha}\frac{\|\nabla \psi_\beta \|^2_{\cL}}{\psi_\beta^2}+\frac{\|\nabla  b^{11}\|^2_{\cL}}{(b^{11})^2} 
=\alpha \Big(1+\frac{\alpha}{n(1+\alpha)}\Big)\sum_{i=1}^n b^{ii}(b^{11})^2K^{\alpha}|\nabla_i h_{11}|^2. 
\end{align*}
Defining,  at $(p_0,t_0)$, we define 
\begin{align*}
I_i &= b^{ii}(b^{11})^2 K^{\alpha}|\nabla_i h_{11}|^2 \\
J_i & = b^{ii}\nabla_1 h_{ii}. 
\end{align*}
We may  rewrite the equation above as
\begin{align*}\label{eq:LB 3rd-I}
 \frac{n(1+\alpha)}{\alpha}\frac{\|\nabla \psi_\beta \|^2_{\cL}}{\psi_\beta^2}+\frac{\|\nabla  b^{11}\|^2_{\cL}}{(b^{11})^2} 
=\alpha(1+\frac{\alpha}{n(1+\alpha)})\sum_{i=1}^n I_i\tag{3.5}
\end{align*}
and also  write 
\begin{align*}
 b^{kl}b^{mn}  \nabla_1 h_{kl}\nabla_1 h_{mn} = |b^{mn}\nabla_1 h_{mn}|^2 = \Big|\sum^n_{i=1} b^{ii}\nabla_1 h_{ii}\Big|^2=\Big|\sum^n_{i=1} J_i\Big|^2
\end{align*}
which gives  
\begin{align*}
\frac{\alpha^2 K^{\alpha}b^{i1}b^{j1} b^{kl}b^{mn}}{b^{11}} \nabla_i h_{kl}\nabla_j h_{mn}&=\alpha^2 K^{\alpha}b^{11} b^{kl}b^{mn}\nabla_1 h_{kl}\nabla_1 h_{mn}=\alpha^2K^{\alpha}b^{11} \Big|\sum_{i=1}^nJ_i \Big|^2. 
\end{align*}
Since ${\displaystyle \alpha^2 \geq \frac{\alpha^2}{1+\alpha}}$, we  conclude that 
\begin{align*}\label{eq:LB 3rd-II}
\frac{\alpha^2 K^{\alpha}b^{i1}b^{j1} b^{kl}b^{mn}}{b^{11}} \nabla_i h_{kl}\nabla_j h_{mn} 
\geq \frac{\alpha^2}{1+\alpha}K^{\alpha}b^{11}\Big|\sum_{i=1}^nJ_i \Big|^2.  \tag{3.6}
\end{align*}
Finally, at $(p_0,t_0)$, we also have 
\begin{align*}
\frac{\alpha K^{\alpha}b^{i1}b^{j1}b^{km}b^{nl}}{b^{11}} \nabla_i h_{kl}\nabla_j h_{mn}& = \alpha K^{\alpha}b^{11}b^{km}b^{nl} \nabla_1 h_{kl}\nabla_1 h_{mn}\\
&=\alpha K^{\alpha}b^{11}(\sum^n_{i=1} |b^{ii}\nabla_1 h_{ii}|^2 +\sum_{i\neq j}b^{ii}b^{jj}|\nabla_{1}h_{ij}|^2) \\
&\geq \alpha K^{\alpha}b^{11}(\sum^n_{i=1} |b^{ii}\nabla_1 h_{ii}|^2 +2\sum_{i\neq 1}b^{ii}b^{11}|\nabla_{1}h_{i1}|^2) \\
& =\alpha K^{\alpha}b^{11}\sum^n_{i =  1} |J_i|^2 +2\alpha\sum_{i\neq 1}I_i. 
\end{align*}
Using  ${\displaystyle \alpha \geq \frac{\alpha^2}{1+\alpha}}$, we may rewrite the inequality above as
\begin{align*}\label{eq:LB 3rd-III}
\frac{\alpha K^{\alpha}b^{i1}b^{j1}b^{km}b^{nl}}{b^{11}} \nabla_i h_{kl}\nabla_j h_{mn} \geq \frac{\alpha^2}{1+\alpha}K^{\alpha}b^{11}\sum_{i\neq  1} |J_i|^2+\alpha K^{\alpha}b^{11} |J_1|^2 +2\alpha\sum_{i\neq 1}I_i. \tag{3.7}
\end{align*}
Adding  (\ref{eq:LB 3rd-II}) and (\ref{eq:LB 3rd-III}), gives that at $(p_0,t_0)$ we have 
$$
\frac{\alpha K^{\alpha}b^{i1}b^{j1}(\alpha b^{kl}b^{mn}+b^{km}b^{nl})}{b^{11}} \nabla_i h_{kl}\nabla_j h_{mn} \geq 
\frac{\alpha^2}{1+\alpha}K^{\alpha}b^{11} \left ( \Big|\sum_{i=1}^nJ_i \Big|^2 + \sum_{i\neq  1} |J_i|^2 \right )
+\alpha K^{\alpha}b^{11} |J_1|^2 +2\alpha\sum_{i\neq 1}I_i
$$
and by  the Cauchy-Schwarz inequality 
\begin{align*}
n \left(\Big|\sum_{i=1}^n J_i \Big|^2+\sum_{i \neq 1} |J_i|^2 \right)& =(1^2+(-1)^2+\cdots+(-1)^2)\left (\Big|\sum_{i=1}^n J_i \Big|^2+\sum_{i \neq 1} |J_i|^2 \right)\\
& \geq \Big |\sum_{i=1}^n J_i +\sum_{i\neq 1} -J_i  \, \Big |^2=|J_1|^2
\end{align*}
and ${\displaystyle 2\alpha \geq \alpha(1+\frac{\alpha}{n(1+\alpha)})}$, we obtain 
\begin{align*}
\frac{\alpha K^{\alpha}b^{i1}b^{j1}(\alpha b^{kl}b^{mn}+b^{km}b^{nl})}{b^{11}} \nabla_i h_{kl}\nabla_j h_{mn} \geq \alpha \Big(1+\frac{\alpha}{n(1+\alpha)}\Big)
\Big ( K^{\alpha}b^{11} |J_1|^2 + \sum_{i\neq 1}I_i \Big ). 
\end{align*}
Combining the last inequality with (\ref{eq:LB 3rd-I}) while using  that  $K^{\alpha}b^{11} |J_1|^2=I_1$,   yields 
\begin{align*}\label{eq:LB 3rd-IV}
-\frac{n(1+\alpha)}{\alpha}\frac{\|\nabla \psi_\beta \|^2_{\cL}}{\psi_\beta^2}-\frac{\|\nabla  b^{11}\|^2_{\cL}}{(b^{11})^2} +\frac{\alpha K^{\alpha}b^{i1}b^{j1}(\alpha b^{kl}b^{mn}+b^{km}b^{nl})}{b^{11}} \nabla_i h_{kl}\nabla_j h_{mn} \geq 0.  \tag{3.8}
\end{align*}
We conclude by  (\ref{eq:LB Pinching 1st}), (\ref{eq:LB pinching reaction}) and (\ref{eq:LB 3rd-IV}) that at $(p_0,t_0)$ the following holds
\begin{align*}
0\geq \frac{\cL w}{w} -\frac{\bd_t w}{w} \geq  -n\Big(1+\frac{1}{\alpha}\Big)(n\alpha -1)\frac{K^\alpha \grad^{-1}}{ \psi_\beta}+n\Big(1+\frac{1}{\alpha}\Big)\frac{\beta}{\psi_\beta}+K^{\alpha}\lambda_{\min}. 
\end{align*}
If  $n\alpha \leq 1$, then the last inequality gives $\displaystyle  n\Big(1+\frac{1}{\alpha}\Big) \leq 0$,  a contradiction. 
Hence  $t_0=0$, and therefore the desired result holds. 
If $n\alpha \geq 1$, then we use the improved inequality  (\ref{eq:LB pinching reaction second}) instead of  (\ref{eq:LB pinching reaction}) by 
and perform the same estimates  for all the other terms, so that at  $(p_0,t_0)$, we obtain 
\begin{align*}
0\geq \frac{\cL w}{w} -\frac{\bd_t w}{w} \geq  -n\Big(1+\frac{1}{\alpha}\Big)(n\alpha -1)\frac{K^\alpha \grad^{-1}}{ \psi_\beta}+n\Big(1+\frac{1}{\alpha}\Big)\frac{\beta}{\psi_\beta}+\frac{1}{n} K^{\alpha}H. 
\end{align*}
Hence 
\begin{align*}
n\Big(1+\frac{1}{\alpha}\Big)(n\alpha -1)\frac{\grad^{-1}}{ \psi_\beta}\geq n\Big(1+\frac{1}{\alpha}\Big)\frac{\beta}{\psi_\beta}K^{-\alpha}+\frac{1}{n} H. 
\end{align*}
Since $n\alpha \geq 1$, we have ${\displaystyle 1+\frac 1{\alpha} \leq 1+n}$,  and using also that  $\grad \geq 1$, $\psi_\beta \leq M$, and  $M \geq \beta$, we conclude from the 
previous inequality that    
\begin{align*}
n ^2(1+n)(n\alpha -1)\psi_\beta^{-1}\geq n^2\Big(1+\frac{1}{\alpha}\Big)K^{-\alpha}\frac{\beta}{\psi_\beta}+H \geq \Big(n^2\Big(1+\frac{1}{\alpha}\Big)K^{-\alpha}+H \Big)\frac{\beta}{M}
\end{align*}
Next, we employ the Young's inequality
\begin{align*}
 \frac{K^{-\alpha}}{(n-1)\alpha+1} +\frac{(n-1)\alpha H}{(n-1)\alpha+1} \geq K^{-\frac{\alpha}{(n-1)\alpha+1} }H^{\frac{(n-1)\alpha}{(n-1)\alpha+1}}=\big(K^{-1}H^{n-1}\big)^{\frac{\alpha}{(n-1)\alpha+1} } 
\end{align*}
and observe  the following
\begin{align*}
K^{-1}H^{n-1} = \frac{H^{n-1}}{\lambda_1 \lambda_2\cdots \lambda_n} = \frac{1}{\lambda_1}\, \frac{H^{n-1}}{\lambda_2\cdots \lambda_n} \geq \frac{1}{\lambda_1} =\lambda^{-1}_{\min}. 
\end{align*}
Combining the last three   inequalities  yields  
\begin{align*}
n^2(1+n)(n\alpha -1)M\beta^{-1}\psi_\beta^{-1} \geq  n^2\Big(1+\frac{1}{\alpha}\Big)K^{-\alpha}+ H \geq \frac{K^{-\alpha}}{(n-1)\alpha+1} +\frac{(n-1)\alpha H}{(n-1)\alpha+1} \geq \big(\lambda_{\min}^{-1}\big)^{\frac{\alpha}{(n-1)\alpha+1} }. 
\end{align*}
We conclude, that  if $n\alpha \geq 1$, the following holds at $(p_0,t_0)$
\begin{align*}
\lambda^{-1}_{\min} \, \psi_\beta^{n(1+\frac{1}{\alpha})} \leq \big(n^2(n+1)(n\alpha -1)M\beta^{-1}\big)^{(n-1)+\frac{1}{\alpha}}\, \psi_\beta^{1+\frac{n-1}{\alpha}}. 
\end{align*}
Thus, $\psi_\beta \leq M$ gives the desired result.
\end{proof}

%\begin{remark}
%In the simpler case where $n\alpha \leq 1$, one may  
%substitute $\psi \coloneqq (M-t-\bar u(p,t))_+$  with  $\bar {\psi} \coloneqq (M-\bar u(p,t))_+$. Then, by showing  the inequality 
%$\bd_t \bar{\psi} \leq \cL \bar{\psi} $, one may  improve  the result in Theorem \ref{thm:IE Local Lower bounds} to 
%\begin{equation*}
%\big (\bar \psi^{-n(1+\frac{1}{\alpha} )} \,  \lambda_{\min} \big ) (p,t) \geq 
%\inf_{p \in  M^n} \big (\bar \psi^{-n(1+\frac{1}{\alpha} )} \,  \lambda_{\min} \big ) (p,0) 
%\end{equation*}
%holding for all $t \in [0,T]$.  
%\end{remark}

\section{Speed estimates}\label{sec-speed} 
This section will be devoted to the proof of a  local speed bound. 
We recall the definition of the cut-off function   
$\psi(p,t)\coloneqq(M-\bar u(p,t))_+$,  where   $ \bar u(p,t) \coloneqq\langle F(p,t),\vec{e}_{n+1} \rangle$  denotes the height function.

\begin{theorem}[Local speed bound]  \label{thm:SE Speed Estimate}
Assume that  a smooth hypersurface $\Sigma_0$ satisfies the assumptions in \emph{Theorem \ref{thm:INT Existence}}  and let $\Sigma_t$ be a complete strictly convex smooth graph solution of \emph{(\ref{eq:INT aGCF})} defined on $M^n \times [0,T)$. Then, given a constant $M \geq 1$, 
\begin{equation*}
\Big( \frac{t}{1+t} \,\Big)(\psi^2 \,  K^{\frac{1}{n}}) (p,t) \leq (4n\alpha+1)^2(2\cW)^{1+\frac{1}{2n\alpha}}(\cW \cA +M^2)
\end{equation*}
where  $\cW$  and $\cA$ are constants given by 
\begin{align*}
\cW &= \sup \{ \grad^2(p,s) : \bar u(p,s) < M, \, s \in [0,t] \}, \\
\cA &= \sup \{ \lambda^{-1}_{\min}(p,s) :  \bar u(p,s) <  M,\,  s \in [0,t] \}.
\end{align*}
\end{theorem}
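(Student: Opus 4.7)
The strategy is to follow the Caffarelli--Nirenberg--Spruck trick as adapted by Ecker and Huisken to mean curvature flow \cite{HE91MCFexist}, and to bound the scaling-correct geometric mean $K^{1/n}$ rather than the speed $K^\alpha$. The key point is that via the AM--GM inequality $H \geq n K^{1/n}$, the main reaction term in the evolution of $K^{1/n}$ becomes a genuine quadratic source $\geq K^{2/n + \alpha}$, matchable against the strongly negative reaction $-\alpha K^\alpha H \grad$ appearing in \eqref{eq:Pre grad_t}. The auxiliary function I would try is
\begin{equation*}
W \;=\; \frac{t}{1+t}\, \psi^2\, K^{1/n}\, \grad^{q}, \qquad q = 2 + \frac{1}{n\alpha},
\end{equation*}
on $M^n \times [0,T]$. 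Hypotheses (ii)--(iii) of Theorem \ref{thm:INT Existence} guarantee that $\psi$ is compactly supported on $M^n$, so $W$ attains a maximum at some $(p_0, t_0)$; if $t_0 = 0$ the bound is trivial, and otherwise the parabolic maximum principle yields $(\bd_t - \cL) W \geq 0$.

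The essential computation is the evolution of $K^{1/n}$, derived from \eqref{eq:Pre aK_t} via the chain rule $\cL f^\gamma = \gamma f^{\gamma - 1}\cL f + \gamma(\gamma - 1)f^{\gamma - 2}\|\nabla f\|^2_\cL$ with $f = K^\alpha$ and $\gamma = 1/(n\alpha)$:
\begin{equation*}
(\bd_t - \cL) K^{1/n} \;=\; \tfrac{1}{n}\, K^{1/n + \alpha}\, H \;+\; (n\alpha - 1)\, K^{-1/n}\,\|\nabla K^{1/n}\|^2_\cL.
\end{equation*}
Combining this with $(\bd_t - \cL)\grad^q = -q(q+1)\grad^{q-2}\|\nabla \grad\|^2_\cL - q\alpha K^\alpha H \grad^q$, obtained from \eqref{eq:Pre grad_t}, and with $(\bd_t - \cL)\psi^2 = 2(n\alpha-1)\psi\grad^{-1}K^\alpha - 2\|\nabla\psi\|^2_\cL$ from \eqref{eq:Pre psi_t}, through the triple product rule for $\cL$, the $K^{1/n + \alpha}H$-reaction coefficient inside $(\bd_t - \cL)(\psi^2 K^{1/n}\grad^q)$ reduces to $\tfrac{1}{n} - q\alpha = -2\alpha$. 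Invoking $H \geq n K^{1/n}$ converts this into the strongly negative contribution $-2n\alpha\, \psi^2 K^{2/n + \alpha}\grad^q$, which is the workhorse of the estimate.

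At $(p_0, t_0)$, the first-order condition $\nabla\log W = 0$ reads $\nabla K^{1/n}/K^{1/n} = -2\nabla\psi/\psi - q\nabla\grad/\grad$, which I would substitute into each gradient contribution $\|\nabla K^{1/n}\|^2_\cL$, $\langle \nabla \psi,\nabla K^{1/n}\rangle_\cL$, and $\langle \nabla \grad,\nabla K^{1/n}\rangle_\cL$ appearing in $(\bd_t - \cL)W$. The resulting expression is a quadratic form in $(\nabla\psi,\nabla\grad)$, controlled via Cauchy--Schwarz in terms of $\|\nabla \psi\|^2_\cL$ and $\|\nabla \grad\|^2_\cL$. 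The former satisfies $\|\nabla \psi\|^2_\cL \leq \alpha K^\alpha \cA$ from $\nabla\psi = -\nabla\bar u$, $|\nabla\bar u|^2 = 1 - \grad^{-2} \leq 1$, and the pointwise bound $b^{ij}v_iv_j \leq \lambda_{\min}^{-1}\,g^{ij}v_iv_j$ afforded by Proposition \ref{prop:LB Euler's formula}; the latter, by the identity $\nabla_i\grad = -\grad^2 h_{ij}\nabla^j\bar u$ and the contraction $b^{ij}h_{jk} = \delta^i_k$, equals $\alpha K^\alpha \grad^4\, h_{kl}\nabla^k\bar u\,\nabla^l\bar u \leq \alpha K^\alpha \grad^4 H$.

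The main obstacle is that this upper bound on $\|\nabla\grad\|^2_\cL$ contains a factor of $H$ of the same strength as the good reaction term, so a naive substitution would overwhelm the good side with a $\grad^{q+2}H$ contribution. The resolution is to not substitute the crude bound and instead pair the gradient contribution with the free negative term $-q\alpha K^\alpha H \grad^q$ coming out of $(\bd_t - \cL)\grad^q$; the precise choice $q = 2 + 1/(n\alpha)$ is made so that this pairing leaves behind a controlled surplus of order $-c_{n,\alpha}\psi^2 K^{2/n+\alpha}\grad^q$ after absorption. Once the $H$-contributions are balanced, the remaining linear cross term $2(n\alpha-1)\psi\grad^{q-1}K^{1/n+\alpha}$ is handled via the Young inequality $2(n\alpha-1)\psi\grad^{q-1}K^{1/n+\alpha} \leq n\alpha\,\psi^2 K^{2/n+\alpha}\grad^q + \tfrac{(n\alpha-1)^2}{n\alpha}\,K^\alpha\grad^{q-2}$, and a further Young step converts the remaining $K^{2/n}$ control into the target $K^{1/n}$ bound. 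The prefactor $(4n\alpha+1)^2$ tracks the combinatorial constants in the quadratic form and the Young exponents, the factor $(2\cW)^{1+1/(2n\alpha)}$ arises because $\grad^q \leq \cW^{q/2} = \cW^{1+1/(2n\alpha)}$ on $\{\psi > 0\}$ together with a factor $2^{1+1/(2n\alpha)}$ absorbed in Young, and the bracket $\cW\cA + M^2$ reflects the two independent gradient sources: $\alpha K^\alpha \cA \grad^2 \leq \alpha K^\alpha \cA \cW$ from the $\|\nabla\psi\|^2_\cL$ direction, and $M^2$ from the bound $\psi \leq M$ entering the linear cross-reaction term.
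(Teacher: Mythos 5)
Your overall framework (parabolic maximum principle on a cut-off product, the Caffarelli--Nirenberg--Spruck idea, the power combination $K^{1/n}\psi^2$, which agrees with the paper's $K^{2\alpha}\psi^{4n\alpha}$ after taking $(2n\alpha)$-th powers, and the time factor $t/(1+t)$) is the right skeleton, but the heart of the argument is wrong: you replace the genuine CNS cut-off in the $\grad$-variable with a bare power $\grad^q$, and this power does not produce a usable negative $\|\nabla\grad\|^2_\cL$ term. Concretely, writing $a=\nabla\psi/\psi$, $b=\nabla\grad/\grad$, $c=\nabla K^{1/n}/K^{1/n}$ and computing $(\bd_t-\cL)\log(\psi^2 K^{1/n}\grad^q)$ from \eqref{eq:Pre psi_t}, \eqref{eq:Pre aK_t}, \eqref{eq:Pre grad_t}, one finds the gradient contribution
\begin{equation*}
2\|a\|_\cL^2 + n\alpha\|c\|_\cL^2 - q\|b\|_\cL^2,
\end{equation*}
and substituting the first-order condition $c=-2a-qb$ at the maximum with your $q=2+\frac{1}{n\alpha}$ gives
\begin{equation*}
(4n\alpha+2)\|a\|_\cL^2 + (8n\alpha+4)\langle a,b\rangle_\cL + (4n\alpha+2)\|b\|_\cL^2 \;=\; (4n\alpha+2)\,\|a+b\|_\cL^2 \;\geq 0,
\end{equation*}
a \emph{non-negative} surplus. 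The only negative reaction available is the $-2\alpha K^{\alpha}H$ from the $H$-terms, but as you yourself point out, $\|b\|_\cL^2=\grad^{-2}\|\nabla\grad\|_\cL^2\leq \alpha K^{\alpha}\grad^2 H$, which carries the extra factor $\grad^2$ that can be as large as $\cW$. The ``free negative term $-q\alpha K^{\alpha}H\grad^q$'' you invoke to resolve this is not free: it has already been exhausted in producing the total reaction coefficient $\tfrac{1}{n}-q\alpha=-2\alpha$, so using it again to absorb the $\|b\|_\cL^2$ surplus is double-counting. There is no choice of $q$ that fixes this, because the good reaction grows linearly in $q$ while the gradient surplus grows like $q^2$, and in any case the uncontrolled factor $\grad^2$ always appears against the $H$ in the bound for $\|\nabla\grad\|_\cL^2$.

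This is exactly why the paper does not use a power of $\grad$ but the CNS cut-off $\varphi(\grad^2)=\tfrac{\grad^2}{2\cW-\grad^2}$ appearing inside $f=K^{2\alpha}\varphi$. The crucial computation is
\begin{equation*}
4\varphi''\grad^2 + 6\varphi' - 6\varphi^{-1}\varphi'^2\grad^2 = \frac{4\cW}{(2\cW-\grad^2)^2}\,\varphi \;>\;0,
\end{equation*}
so the coefficient of $K^{2\alpha}\|\nabla\grad\|^2_\cL$ in the evolution of $f$ is a \emph{negative} quantity that becomes large as $\grad^2\to 2\cW$; this built-in blow-up is what eats the $\|\nabla\grad\|^2_\cL$ cross-terms coming from the first-order condition and the Cauchy--Schwarz step (see the cancellation of $\tfrac{4\cW}{(2\cW-\grad^2)^2}f\psi^{4n\alpha}\|\nabla\grad\|^2_\cL$ in the paper's proof). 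A pure power $\grad^q$ has no such mechanism: it never ``sees'' the a'priori bound $\grad^2\leq\cW$, and the resulting quadratic form is of the wrong sign. Your plan would need to be rebuilt with the CNS cut-off (or an equivalent device encoding $\grad^2\leq\cW$ structurally) in place of $\grad^q$ before the rest of the argument, including the final sequence of Young inequalities, can go through.
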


\begin{proof}
Choosing a fixed time $T_0 \in (0,T)$, we redefine $\cW$ and $\cA$ by
\begin{align*}
\cW = \sup \{ \grad^2(p,t) : \bar u(p,t) < M, \,   t \in [0,T_0] \}, \quad 
\cA = \sup \{ \lambda^{-1}_{\min}(p,t) : \bar u(p,t) <  M, \, t \in [0,T_0] \}. 
\end{align*}
Also, we define $\eta:[0,T)\to \mathbb{R}$ by
\begin{align*}
\eta(t)=\frac{t}{1+t}
\end{align*}
which will be used later in this proof.

Following the well used idea by Caffarelli, Nirenberg and Spruck in \cite{CNS88trick} (see also in  \cite{HE91MCFexist} and  \cite{CD15Qk}), we  define the  function $\varphi=\varphi(\grad^2)$, depending on $\grad^2$, by 
\begin{equation*}
\varphi (\grad^2) = \frac{\grad^2}{2\cW- \grad^2}.
\end{equation*}
The evolution equation of $\grad$ in  (\ref{eq:Pre grad_t}) gives 
\begin{align*}
\bd_t (\grad^2) =\cL(\grad^2)  - 2\alpha K^{\alpha}H\, \grad^2 - 6\|\nabla \grad\|^2_\cL.
\end{align*}
Then, the evolution equation of $ \varphi(\grad^2)$ is
\begin{align*} 
\bd_t \varphi  = \varphi' ( \cL\grad^2 - 2\alpha K^{\alpha}H\grad^2 - 6\|\nabla \grad\|^2_\cL) 
 = \cL \varphi -\varphi''\|\nabla \grad^2\|^2_\cL - \varphi' (2\alpha K^{\alpha}H \grad^2 + 6\|\nabla \grad\|^2_\cL).  
\end{align*}
Also, the evolution equation of $K^{\alpha}$ in  (\ref{eq:Pre aK_t}) leads to
\begin{align*}
\bd_t K^{2\alpha}   = \cL K^{2\alpha} -\frac{1}{2} K^{-2\alpha}\|\nabla K^{2\alpha}\|^2_\cL  + 2\alpha K^{3\alpha}H 
\end{align*}
implying the following   evolution equation for  $K^{2\alpha}\varphi(\grad^2)$ 
\begin{align*}
\bd_t (K^{2\alpha}\varphi) = &\cL (K^{2\alpha}\varphi)  -2\langle \nabla K^{2\alpha},\nabla \varphi \rangle_\cL + 2\alpha K^{3\alpha}H (\varphi - \varphi' \grad^2) \\
&-\frac{1}{2}\varphi K^{-2\alpha}\|\nabla K^{2\alpha}\|^2_\cL   - (4\varphi'' \grad^2 + 6\varphi')K^{2\alpha}\|\nabla \grad\|^2_\cL. 
\end{align*}
Observe that
\begin{align*}
-2\langle \nabla K^{2\alpha},\nabla \varphi \rangle_\cL 
 = &-\langle \nabla K^{2\alpha},\nabla \varphi \rangle_\cL  + \varphi^{-1}K^{2\alpha}\|\nabla \varphi \|^2_\cL - \varphi^{-1}\langle\nabla \varphi , \nabla 
 ( K^{2\alpha} \varphi) \rangle_\cL\\
 \leq &  \frac{1}{2}\varphi K^{-2\alpha}\|\nabla K^{2\alpha}\|^2_\cL  + \frac{3}{2}\varphi^{-1}K^{2\alpha}\|\nabla \varphi \|^2_\cL- \varphi^{-1}\langle\nabla \varphi , \nabla (K^{2\alpha}  \varphi) \rangle_\cL. 
\end{align*}
Hence, the following inequality holds
\begin{align*}\label{eq:SE f_t}
\bd_t (K^{2\alpha} \varphi)  \leq & \cL (K^{2\alpha}\varphi)  - \varphi^{-1}\langle\nabla \varphi , \nabla( K^{2\alpha} \varphi) \rangle_\cL +  2\alpha K^{3\alpha}H(\varphi - \varphi' \grad^2)\tag{4.1}
\\
& - (4\varphi'' \grad^2 + 6\varphi' -6\varphi^{-1} \varphi'^2\grad^2)\, K^{2\alpha}\|\nabla \grad\|^2_\cL. 
\end{align*}
Now,  we have  
$$\varphi(\grad^2)+1=\frac{2\cW}{2\cW-\grad^2}, \qquad \varphi'(\grad^2)=\frac{2\cW}{(2\cW-\grad^2)^2},  \qquad  \varphi''(\grad^2)=\frac{4\cW}{(2\cW-\grad^2)^3}.$$
Therefore, by direct calculation we  obtain
$$
\varphi - \varphi' \grad^2  =  \frac{\grad^2}{2\cW-\grad^2} -\frac{2\cW\grad^2}{(2\cW-\grad^2)^2}= -\frac{\grad^4}{(2\cW-\grad^2)^2} =-\varphi^2
$$
and 
$$ \varphi^{-1} \nabla \varphi  = \frac{2\cW-\grad^2}{\grad^2}\frac{4\cW\grad\nabla \grad}{(2\cW-\grad^2)^2}=4\cW\,  \varphi v^{-3} \nabla \grad $$
and also 
$$4\varphi'' \grad^2 + 6\varphi' -6\varphi^{-1} \varphi'^2\grad^2  = \frac{16\cW \grad^2}{(2\cW-\grad^2)^3}+\frac{12\cW }{(2\cW-\grad^2)^2}-\frac{24\cW^2}{(2\cW -\grad^2)^3} =\frac{4\cW }{(2\cW-\grad^2)^2}\, \varphi.
$$
Setting   $f\coloneqq K^{2\alpha}\, \varphi$ in  (\ref{eq:SE f_t})  and  using the identities  above  yields 
\begin{align*}
\bd_t  f &\leq  \cL f    -4\cW\, \varphi\grad^{-3} \langle\nabla \grad,\nabla f\rangle_\cL -2\alpha fK^{\alpha}H\varphi -\frac{4\cW}{(2\cW-\grad^2)^2}f\|\nabla \grad\|^2_\cL.
\end{align*}
On the other hand, (\ref{eq:Pre psi_t})  gives 
\begin{align*}
\bd_t \psi= \cL \psi  + (n\alpha-1)  \grad^{-1} K^\alpha \leq \cL \psi  + n\alpha  K^\alpha. 
\end{align*}
Hence, on the support of $\psi$, we have
\begin{align*}
 \bd_t \psi^{4n\alpha}\leq  \cL \psi^{4n\alpha} - 4n\alpha(4n\alpha-1)\psi^{4n\alpha-2}\|\nabla \psi\|^2_{\cL}+4 n^2\alpha^2 K^\alpha \psi^{4n\alpha-1}.
\end{align*}
Thus,   on the support of $\psi$, the following holds 
\begin{align*}
\bd_t (f\psi^{4n\alpha})   \leq & \cL (f\psi^{4n\alpha})  -2\langle\nabla \psi^{4n\alpha}, \nabla f\rangle_\cL  -4\cW \, \varphi\grad^{-3} \psi^{4n\alpha}\langle\nabla \grad,\nabla f\rangle_\cL
 -  2\alpha fK^{\alpha}H\varphi\psi^{4n\alpha}\\
 & -\frac{4\cW}{(2\cW-\grad^2)^2}f\psi^{4n\alpha}\|\nabla \grad\|^2_\cL -4n\alpha(4n\alpha-1)f\psi^{4n\alpha-2}\|\nabla \psi\|^2_\cL +4 n^2\alpha^2 K^\alpha \psi^{4n\alpha-1}.
\end{align*}
Next, we compute
\begin{align*}
 -4\cW\, \varphi\grad^{-3} &\psi^{4n\alpha}\langle\nabla \grad,\nabla f\rangle_\cL= \\
 = & -4\cW \, \varphi\grad^{-3} \langle\nabla \grad,\nabla (f \psi^{4n\alpha}) \rangle_\cL +16n\alpha\cW\, \varphi\grad^{-3} f\psi^{4n\alpha-1}\langle\nabla \grad,\nabla  \psi\rangle_\cL \\
\leq & -4\cW\, \varphi\grad^{-3} \langle \nabla \grad,\nabla (f \psi^{4n\alpha}) \rangle_\cL  +\frac{4\cW \, f\psi^{4n\alpha}\|\nabla \grad\|^2_\cL}{(2\cW-\grad^2)^2} +   16n^2\alpha^2\cW\, (2\cW-\grad^2)^2\varphi^2 \grad^{-6}f \psi^{4n\alpha-2}\|\nabla \psi\|^2_\cL \\
= & -4\cW \, \varphi\grad^{-3} \langle\nabla \grad,\nabla (f \psi^{4n\alpha}) \rangle_\cL  +\frac{4\cW }{(2\cW-\grad^2)^2} \,
f\psi^{4n\alpha}\|\nabla \grad\|^2_\cL+   16\, n^2\alpha^2\cW \grad^{-2}f \psi^{4n\alpha-2}\, \|\nabla \psi\|^2_\cL.
\end{align*}
Moreover, we have 
\begin{equation*}
-2\langle\nabla \psi^{4n\alpha}, \nabla f\rangle_\cL = -2\psi^{-4n\alpha} \langle\nabla \psi^{4n\alpha}, \nabla ( f \psi^{4n\alpha})  \rangle_\cL +32 n^2\alpha^2 \, \, f \psi^{4n\alpha-2}  \|\nabla \psi\|^2_\cL. 
\end{equation*}
Combining the above gives  
\begin{align*}
\bd_t (f\psi^{4n\alpha})  \leq  &\cL (f\psi^{4n\alpha}) 
-\langle 2\psi^{-4n\alpha} \nabla \psi^{4n\alpha} + 4\cW\varphi\grad^{-3} \nabla \grad,\nabla (f \psi^{4n\alpha}) \rangle_\cL -  2\alpha fK^{\alpha}H\varphi\psi^{4n\alpha}   \\
&  + \big (32\, n^2\alpha^2+ 16 \, n^2\alpha^2\cW \grad^{-2}-4n\alpha(4n\alpha-1) \big )f\psi^{4n\alpha-2}\|\nabla \psi\|^2_\cL +4 n^2\alpha^2 K^\alpha \psi^{4n\alpha-1}.
\end{align*}
In addition, on the support of $\psi$, we have $\nabla \psi= - \nabla \bar u = - \nabla \langle F ,\vec{e}_{n+1}\rangle$\, which leads to  
\begin{align*}
\|\nabla \psi\|^2_\cL & = \|\nabla \langle F , \vec{e}_{n+1} \rangle \|^2_\cL \leq \sum^{n+1}_{m=1}\|\nabla \langle F , \vec{e}_{m} \rangle \|^2_\cL = \sum^{n+1}_{m+1} \alpha K^{\alpha}b^{ij} \langle F_i,\vec{e}_{m} \rangle \, \langle F_j,\vec{e}_{m} \rangle \\
&= \sum^n_{i=1}\sum^n_{j=1} \alpha K^{\alpha}b^{ij} \Big( \sum^{n+1}_{m=1}  \langle F_i,\vec{e}_{m} \rangle \, \langle F_j,\vec{e}_{m} \rangle \Big) =  \sum^n_{i=1}\sum^n_{j=1}
\alpha K^{\alpha}b^{ij} g_{ij}  \leq n\alpha K^{\alpha}\lambda_{\min}^{-1}\leq n\alpha \cA K^{\alpha}.
\end{align*}
Hence, $\grad \geq 1$ implies 
\begin{equation*}
\big (32n^2\alpha^2+16 n^2\alpha^2\cW\,  \grad^{-2}-4n\alpha(4n\alpha-1) \big )\, f\psi^{4n\alpha-2}\, \|\nabla \psi\|^2_\cL \leq n\alpha \, \big (16n^2\alpha^2(\cW+1)+4n\alpha \big )f\psi^{4n\alpha-2}\cA K^{\alpha}.
\end{equation*}
Thus, by the inequalities $H \geq nK^{\frac{1}{n}}$ and $\varphi \geq {1}/{(2\cW)}$, the evolution equation of $f\psi^{4n\alpha}$ can be reduced to the following 
\begin{align*}
\bd_t (f\psi^{4n\alpha} ) \leq  & \cL (f\psi^{4n\alpha})  -\langle 2\psi^{-4n\alpha} \nabla \psi^{4n\alpha}+ 4\cW\varphi\grad^{-3} \nabla \grad,\nabla f \psi^{4n\alpha}\rangle_\cL
-  n\alpha \, \cW^{-1}  K^{\alpha +\frac{1}{n}}\, f \psi^{4n\alpha}\\ 
& +4n^2\alpha^2 (4n\alpha(\cW +1)+1)\cA K^{\alpha} \, f\psi^{4n\alpha-2}+4 n^2\alpha^2 K^\alpha \psi^{4n\alpha-1}.
\end{align*}
Involving $\eta\eqcolon t(1+t)^{-1}$ and $\bd \eta_t = (1+t)^{-2}\leq 1$ yields
\begin{align*}
\bd_t (\eta^{2n\alpha} f\psi^{4n\alpha} ) \leq  & \cL (\eta^{2n\alpha}f\psi^{4n\alpha})  -\langle 2\psi^{-4n\alpha} \nabla \psi^{4n\alpha}+ 4\cW\varphi\grad^{-3} \nabla \grad,\nabla \eta^{2n\alpha} f \psi^{4n\alpha}\rangle_\cL
-  n\alpha \, \cW^{-1}  K^{\alpha +\frac{1}{n}}\eta^{2n\alpha} f \psi^{4n\alpha}\\ 
&  +4n^2\alpha^2 (4n\alpha(\cW +1)+1)\cA K^{\alpha} \eta^{2n\alpha} f\psi^{4n\alpha-2}+4 n^2\alpha^2 K^\alpha  \eta^{2n\alpha} f \psi^{4n\alpha-1} +2n\alpha\eta^{2n\alpha-1} f\psi^{4n\alpha}.  
\end{align*}
Now by  conditions  (ii), (iii) in Theorem \ref{thm:INT Existence}   $\psi$ is compactly supported. Hence $\eta^{2n\alpha}f\psi^{4n\alpha}$ attains its maximum in $M^n \times [0,T_0] $ at some $(p_0,t_0)$ with $t_0>0$. Then,  the last  inequality implies that at $(p_0,t_0)$  
\begin{align*}
n\alpha \, \cW^{-1}  K^{\alpha +\frac{1}{n}}\eta^{2n\alpha} f \psi^{4n\alpha}\leq  \, &  4n^2\alpha^2 (4n\alpha(\cW +1)+1)\cA K^{\alpha} \eta^{2n\alpha} f\psi^{4n\alpha-2}\\
&+4 n^2\alpha^2 K^\alpha  \eta^{2n\alpha} f \psi^{4n\alpha-1} +2n\alpha\eta^{2n\alpha-1} f\psi^{4n\alpha}.
\end{align*}
Multiplying  by $(n\alpha)^{-1}\cW K^{-\alpha} \eta^{-2n\alpha+1} f^{-1} \psi^{-4n\alpha +2}$ yields the bound 
\begin{align*}
 \eta K^{\frac{1}{n}}\psi^2\leq  \,   4n\alpha \cW(4n\alpha(\cW +1)+1)\cA \, \eta +4 n\alpha\cW  \eta  \psi +2\cW K^{-\alpha} \psi^2
\end{align*}
and by  $\cW \geq 1 $, $\psi \leq M$, $1\leq M$, and $\eta \leq 1$
\begin{align*}
\eta K^{\frac{1}{n}}\psi^2\leq \, &  4n\alpha  \cW \,\eta \Big (   \big(4n\alpha(\cW + \cW)+\cW \big) \, \cA + \psi \Big )+ 2\cW \eta^{n\alpha}\psi^{2+2n\alpha}(\eta K^{\frac{1}{n}}\psi^2)^{-n\alpha}\\
 \leq  \, &  4n\alpha \cW (8n\alpha +1)  (\cW\cA + M)+2\cW M^{2+2n\alpha}(\eta K^{\frac{1}{n}}\psi^2)^{-n\alpha} \\
  \leq \, &  2 \cW (16n^2\alpha^2 +2n\alpha+M^{2n\alpha}(\eta K^{\frac{1}{n}}\psi^2)^{-n\alpha})  (\cW\cA + M^2). 
\end{align*}
Hence, in the case of $\eta K^{\frac{1}{n}}\psi^2 \geq M^2$, the last inequality yields
\begin{align*}
\eta K^{\frac{1}{n}}\psi^2\leq   2 \cW (16n^2\alpha^2 +2n\alpha+1)  (\cW\cA + M^2)\leq 2 \cW (4n\alpha+1)^2  (\cW\cA + M^2).
\end{align*}
In the other case, we can simply obtain \ $\eta K^{\frac{1}{n}}\psi^2 \leq M^2\leq 2 \cW (4n\alpha+1)^2  (\cW\cA + M^2)$. Thus, at $(p_0,t_0)$,
\begin{align*}
\eta K^{\frac{1}{n}}\psi^2\leq 2 \cW (4n\alpha+1)^2  (\cW\cA + M^2).
\end{align*}
Let $\Psi$ denote the maximum value  $\eta^{2n\alpha}f\psi^{4n\alpha}(p_0,t_0)= \eta^{2n\alpha}\varphi K^{2\alpha}\psi^{4n\alpha}(p_0,t_0)$. Then, $\varphi \leq 1$ gives 
\begin{align*}
\Psi \leq  (\eta K^{\frac{1}{n}}\psi^2)^{2\alpha n}(p_0,t_0)  \leq (2\cW)^{2\alpha n}( 4n\alpha+1)^{4\alpha n} (\cW\cA +  M^2)^{2\alpha n}.
\end{align*}
Using also that    $(2\cW)^{-1}\leq \varphi$,  we finally conclude that for all $p \in M^n$  and $t \in [0,T_0]$ the following holds 
\begin{align*}
\frac{\eta^{2n\alpha} K^{2\alpha} \psi^{4n\alpha}(p,t)}{2\cW}\leq  \eta^{2n\alpha}\varphi \, K^{2\alpha} \psi^{4n\alpha} (p,t)\leq  \Psi  \leq (2\cW)^{ 2\alpha n}( 4n\alpha+1)^{4\alpha n} (\cW\cA +  M)^{2\alpha n}.
\end{align*}
Hence, setting $t =T_0$ yields 
\begin{align*}
(\eta K^{\frac{1}{n}}\psi^2)  (p,T_0) \leq (2\cW)^{1+\frac{1}{2n\alpha}}(4n\alpha+1)^2(\cW \cA +M^2)
\end{align*}
and the  desired result simply follows by substituting $T_0$ by  $t$. 
\end{proof}

\section{Long time existence}\label{sec-let}

In this final section, we will establish the all time 
existence of the complete non-compact $\alpha$-Gauss curvature flow (\ref{eq:INT aGCF}), as stated in our main Theorem \ref{thm:INT Existence}.
Our proof will be based on the a'priori estimates in sections  \ref{sec-prelim}-\ref{sec-speed}, and the proof will be done in two steps. We will first show, in the next Theorem,  the existence of a complete solution $\Sigma_t$ on $t \in (0,T)$,  where  $T=T_\Omega$ depends on the domain 
$\Omega$.   We will then construct an 
appropriate barrier to guarantee that each $\Sigma_t$ remains as a graph over the same domain $\Omega$,  for all $t \in (0,T)$, 
implying  that $T=+\infty$ independently from the domain $\Omega$.

\begin{theorem}\label{thm:LTE Long time existence} Let $\Omega$, $ u_0$  and $\Sigma_0$ satisfy the conditions in \emph{Theorem \ref{thm:INT Existence}}. Assume $B_R(x_0) \subset \Omega$, for some $R>0$. Then, given an immersion 
$F_0:M^n \rightarrow \rno$ with $F_0(M^n)=\Sigma_0$, there is a solution $F:M^n \times (0,T) \rightarrow \rno$ of \emph{(\ref{eq:INT aGCF})} for some $T \geq (n\alpha+1)R^{n\alpha+1}$ such that for each $t \in (0,T)$, the image $\Sigma_t \coloneqq F(M^n,t)$ is a strictly convex smooth complete graph of a function $u(\cdot,t):\Omega_t \to \mathbb{R}$ defined on a convex open $\Omega_t \subset \Omega$, and also $u(\cdot,t)$ and $\Omega_t$ satisfy the conditions  of $u_0$ and $\Omega$ in \emph{Theorem \ref{thm:INT Existence}}.
\end{theorem}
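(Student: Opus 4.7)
The plan is to construct the solution by smooth, compact approximation of $\Sigma_0$, and then pass to the limit using the local a priori estimates of Sections~\ref{sec-prelim}--\ref{sec-speed}. I first approximate $\Sigma_0$ by a sequence of compact, smooth, uniformly convex hypersurfaces $\Sigma_0^k$: take the portion of $\Sigma_0$ below height $M_k$, with $M_k \nearrow +\infty$, mollify, and close it off by a smooth uniformly convex cap slightly above $M_k$. Each $\Sigma_0^k$ is a compact uniformly convex smooth graph over a bounded convex domain $\Omega_k \Subset \Omega$. By the classical theory for the $\alpha$-Gauss curvature flow on compact strictly convex hypersurfaces (see e.g.\ \cite{A99GCF, AC11aGCF, Chow85GCF, KL13aGCF, ANG15aGCF}), each $\Sigma_0^k$ evolves into a unique smooth, strictly convex, compact solution $\Sigma_t^k$ of \eqref{eq:INT aGCF} on a maximal interval $[0, T_k^{\ast})$ ending at the extinction time $T_k^\ast$.

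Next, I apply Theorems \ref{thm:Pre Gradient}, \ref{thm:LB Local Lower bounds}, and \ref{thm:SE Speed Estimate} uniformly in $k$. By conditions (ii) and (iii) of Theorem \ref{thm:INT Existence}, $\{p : \bar u(p,0) < M\}$ is a compact subset of $M^n$ for each $M > 0$; for $k$ so large that $M_k \gg M$, the cut-off $\psi_\beta \coloneqq (M - \beta t - \bar u)_+$ is compactly supported on $\Sigma_t^k$ and the initial data inside its support agrees with that of $\Sigma_0$. The three theorems therefore give uniform-in-$k$ control of the gradient function $v$, the reciprocal $\lambda_{\min}^{-1}$ of the smallest principal curvature, and the speed $K^\alpha$ on this support, for $t \in [0, M/\beta)$. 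These bounds render \eqref{eq:INT aGCFeq} uniformly parabolic on height slabs, so Krylov--Safonov H\"older estimates for fully nonlinear parabolic equations combined with Schauder theory yield uniform $C^\infty$ bounds on compact subsets of $\{\bar u < M/2\} \times [\delta, T]$ for any $\delta > 0$ and any $T < \liminf_{k\to\infty} T_k^\ast$. A diagonal extraction then produces a smooth, complete, strictly convex limit solution $\Sigma_t$ of \eqref{eq:INT aGCF} on $M^n \times (0, T)$.

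To obtain the lower bound $T \geq (n\alpha+1) R^{n\alpha+1}$ on the existence time, I use a round shrinking sphere as an interior barrier. Since $\overline{B_R(x_0)} \subset \Omega$, the function $u_0$ is bounded on $\overline{B_R(x_0)}$; the closed ball of radius $R$ in $\rno$ centered at $(x_0, h_0)$, with $h_0 \coloneqq R + \max_{\overline{B_R(x_0)}} u_0$, lies inside the epigraph of $u_0$, so its boundary sphere is enclosed by $\Sigma_0^k$ for all $k$ large. By the avoidance principle for \eqref{eq:INT aGCF}, this sphere evolves as a round shrinking sphere which remains inside $\Sigma_t^k$ throughout its lifetime, of order $R^{n\alpha+1}$. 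This keeps the inradius of the convex hull of $\Sigma_t^k$ bounded below, which combined with the gradient estimate prevents the projection $\Omega_t^k$ from collapsing near $x_0$ during this time; passing to the limit yields the claimed lower bound on $T$.

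Finally, I verify that the limit $\Sigma_t$ is the graph of some $u(\cdot,t)$ over a convex open $\Omega_t \subset \Omega$ satisfying (i)--(iii). The gradient estimate gives $v < \infty$ locally, so $\Sigma_t$ is locally a graph; combined with strict convexity this upgrades to a global graph over the convex projection $\Omega_t$, obtained as the limit of the convex domains $\Omega_t^k$. Condition (i) follows because under \eqref{eq:INT aGCFeq} the height $u$ is strictly increasing in $t$, hence $\inf u(\cdot,t) \geq \inf u_0 \geq 0$. The main obstacle I foresee is establishing (ii), (iii) for $u(\cdot,t)$ together with the inclusion $\Omega_t \subset \Omega$: one must show $u(\cdot,t) \to +\infty$ as one approaches $\partial\Omega_t$ and analogously at infinity when $\Omega_t$ is unbounded, and that the horizontal projection of $\Sigma_t$ does not escape $\Omega$. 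I would address this by constructing vertical cylindrical or planar comparison barriers built from the initial data using (ii) and (iii), combined with the monotonicity $\partial_t u \geq 0$ from \eqref{eq:INT aGCFeq} to block horizontal spreading.
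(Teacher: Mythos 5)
Your overall strategy is essentially the paper's: build compact approximating solutions, apply the local estimates of Sections~\ref{sec-prelim}--\ref{sec-speed} uniformly, and pass to the limit. But there are two concrete differences, one of which exposes a gap in your argument that the paper closes much more simply.

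First, the construction of the approximants. You propose to truncate $\Sigma_0$ below height $M_k$, mollify, and ``close it off by a smooth uniformly convex cap.'' The paper instead reflects $\Sigma_0 \bigcap (\rn \times [0,j])$ across the hyperplane $\{x_{n+1}=j\}$, takes a $(1/j)$-envelope of the doubled surface, and mollifies its support function on $S^n$. The reason is not cosmetic: the reflected surface is symmetric about $\{x_{n+1}=j\}$, and since the closed convex flow is unique this symmetry persists, so the lower half $\Sj^j_t \bigcap (\rn\times[0,j])$ is automatically a graph for all $t$. Moreover the reflected bodies are nested in $j$, so by the comparison principle the solutions $\Gamma^j_t$ are monotone, $u_0 \leq u^{j+1} \leq u^j$ on their common domain, and the limit $u=\lim u^j$ exists pointwise without any diagonal extraction. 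Your ``cap'' construction gives no such monotonicity in $k$; diagonal extraction will give some subsequential limit, but identifying it, and in particular getting $u\geq u_0$ in the limit, requires the additional care you would have had to supply.

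Second, and more importantly, the ``main obstacle'' you flag at the end -- verifying conditions (ii), (iii) for $u(\cdot,t)$ and the inclusion $\Omega_t\subset\Omega$ -- does not in fact require cylindrical or planar barriers. You already observed that $\bd_t u > 0$ under \eqref{eq:INT aGCFeq} and used it to get condition (i); but the same inequality immediately gives $u(x,t)\geq u_0(x)$ for every $x$, from which (ii), (iii) and $\Omega_t\subset\Omega$ follow at once: if $x\in\Omega_t$ then $u(x,t)<+\infty$, so $u_0(x)<+\infty$, so $x\in\Omega$; and the divergence of $u(\cdot,t)$ at $\bd\Omega_t$ and at spatial infinity is inherited from $u_0$ because $u\geq u_0$. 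In the paper this is exactly property ($P_1$), $u_0\leq u^{j+1}\leq u^j$, which passes to the limit. The barriers you propose (and which the paper does construct) are needed only for the \emph{converse} inclusion $\Omega\subset\Omega_t$, proved in the subsequent theorem, not for what is claimed here.

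One further point: in Step~3 the paper still has to work to justify applying Theorems~\ref{thm:Pre Gradient}, \ref{thm:LB Local Lower bounds}, \ref{thm:SE Speed Estimate} to $\Gamma^j_t$, since $\Gamma^j_0$ is only $C^{1,1}$. It mollifies the support function and shows, via the lower semicontinuity of $\lambda_{\min}$ under this mollification, that the initial data entering the estimates on $\{\bar u < M\}$ are controlled uniformly as $\epsilon\to 0$. In your setup you would need an analogous check that your mollification does not destroy the uniform-in-$k$ control of $\inf_{Q_M}\lambda_{\min}(\cdot,0)$ and $\sup_{Q_M}\grad(\cdot,0)$; it suffices to mollify only near the cap (above height $M$), but this should be said.
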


We begin with some  extra notation.

\begin{notation}\label{not2}  We have:  

\begin{enumerate}
\item  Given a set $A \in \rno$, we denote by $\cv(V)$ its \emph{convex hull} $\{ tx+(1-t)y:  x,y \in A, t \in [0,1]\}$.
\item  Let $\Sigma$ be a convex complete (either non-compact or closed) hypersurface.  If a set $V$ is a subset of  $\cv(\Sigma)$, we say $V$ is \emph{enclosed} by $\Sigma$ and use the notation  
\begin{equation*}
V \preceq \Sigma. 
\end{equation*}
In particular, if $V \bigcap \Sigma =\emptyset$ and $V \preceq \Sigma $, we use $V \prec \Sigma$.

\item For a convex smooth hypersurface $\Sigma$ with a point $X\in \Sigma$, we denote by $K(\Sigma)(X)$ the Gauss curvature of $\Sigma$ at $X$.

\item For a convex complete graph $\Sigma$ with a point $X\in \Sigma$, we define $\bar u (\Sigma)(X)$ and $\grad_{\min}(\Sigma)(X)$ by
\begin{align*}
\bar u (\Sigma)(X)= \langle X,\vec{e}_{n+1}\rangle ,\quad \grad(\Sigma)(X)=\sup\big \{\langle \vec{n}_L,\vec{e}_{n+1} \rangle^{-1}: L \; \text{is a hyperpalne tangent to} \; \Sigma\; \text{at}\; X  \,\big\}
\end{align*}
where $\vec{n}_L$ is the upward unit normal vector of a hyperplane $L$.

\item $B^{n+1}_R(Y)\coloneqq \{X \in \rno: |X-Y|<R \}$ 
denotes the \emph{$(n+1)$-ball of  radius $R$ centered at $Y \in\rno$}. 

\item For a convex closed hypersurface $\Sigma$, we define the \textit{support function} $S:S^n\to \mathbb{R}$ by
\begin{align*}
S(v)=\max_{Y\in \Sigma}\,\langle v,Y\rangle
\end{align*}

\item For a convex hypsersurface $\Sigma$ and $\eta>0$, we denote by $\Sigma^\eta$ the $\eta$-envelope of $\Sigma$.
\begin{align*}
\Sigma^{\eta} = \big \{Y \in \rno :d(Y,\Sigma)=\eta,\, Y \not \in \cv(\Sigma) \big\}
\end{align*}
where $d$ is the distance function.

\item  For $r >0$ and $(x_0,t_0) \in \rn \times (0,+\infty)$, 
$Q_r((x_0,t_0))\coloneqq B_r(x_0) \times (t_0 - r^2, t_0]$ denotes  the parabolic cube centered at $(x_0,t_0)$, where $B_r(x_0)$ 
is the  $n$-ball of  radius $r$ centered at $x_0 \in\rn$. Also, for $\beta \in (0,1)$,   
$C^{\beta,\beta/2}_{x,t}(Q_r)$ denotes  the standard H\"older space with respect to the parabolic distance. 
\end{enumerate}
\end{notation}

In the proof of Theorem \ref{thm:LTE Long time existence}
we  will use a standard Schauder  estimate for  equation \eqref{eq:INT aGCFeq}. However, since  \eqref{eq:INT aGCFeq} is not a concave equation we cannot directly use the known  regularity theory. To detour  this difficulty, we slightly modify the standard  $C^{2,\beta}$ estimate of G. Tian and X.-J. Wang  in  \cite{W12Schauder}.  

\begin{proposition}[$C^{2,\beta}$ estimate]\label{prop:LTE C^(2,beta) Estimates}
Let $ u: Q_r  \rightarrow \mathbb{R}$ be a strictly convex and smooth solution of
 \emph{(\ref{eq:INT aGCFeq})} with $Q_r\coloneqq Q_r((x_0,t_0))$.  
%\begin{equation*} 
%\frac{\bd }{\bd t}  u(x,t)  = \frac{ (\det D^2  u)^\alpha}{(1+|D u|^2)^{\frac{(n+2)\alpha-1}{2}}}. 
%%=\cM(D^2  u,D u)
%\end{equation*}
Then, there exist some $\beta \in (0,1)$ such that for any $\sigma \in (0,1)$, we have 
\begin{equation*}
\| D^2 u \|_{C^{\beta,\beta/2}_{x,t}(Q_{\sigma  r})} \leq C(r, \sigma ,\alpha, \beta , n, \; \sup_{Q_r} |u| , \; \sup_{Q_r}\grad, \;\sup_{Q_r}K, \; \inf_{Q_r}\lambda_{\min}) 
\end{equation*}
where $\grad$, $K$, and $\lambda_{\min}$ are the gradient function, the Gauss curvature, and the smallest principal curvature of $\Sigma_{t}=\{(y, u(y,t)): y \in B_r(x_0) \}$ at $(x, u(x,t))$.
\end{proposition}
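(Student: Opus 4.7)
The plan is to convert \eqref{eq:INT aGCFeq} into a Monge--Amp\`ere equation by rewriting it as
\begin{equation*}
\det D^2 u \;=\; \bigl[\,u_t\,(1+|Du|^2)^{((n+2)\alpha-1)/2}\bigr]^{1/\alpha} \;=:\; f(x,t).
\end{equation*}
The assumed bounds imply $|u|+|Du|\le C$, and combining $\lambda_{\min}\ge c>0$ with the upper bound on $K$ and on $|Du|$ forces all eigenvalues of $D^2 u$ to lie in a compact subinterval $[m,M]\subset(0,\infty)$. Consequently $f$ is bounded above and away from zero, and the spatial linearization of \eqref{eq:INT aGCFeq} is uniformly elliptic.

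The first step is to establish parabolic H\"older regularity of $Du$ and $u_t$. Writing \eqref{eq:INT aGCFeq} as $u_t=F(D^2 u,Du)$ and differentiating in $x_k$ or in $t$ produces the linear uniformly parabolic equations
\begin{equation*}
\partial_t u_k \;=\; F_{p_{ij}}(D^2 u,Du)\,\partial_i\partial_j u_k + F_{q_l}(D^2 u,Du)\,\partial_l u_k, \qquad \partial_t u_t \;=\; F_{p_{ij}}\,\partial_i\partial_j u_t + F_{q_l}\,\partial_l u_t
\end{equation*}
whose coefficients are bounded and measurable; the leading matrix is a positive multiple of the cofactor matrix of $D^2 u$, and is uniformly elliptic thanks to the eigenvalue bounds. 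The Krylov--Safonov estimate then yields $Du,\,u_t\in C^{\beta,\beta/2}_{x,t}(Q_{r'})$ for some $\beta=\beta(n,\alpha,m,M)\in(0,1)$ and every $r'<r$.

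With $Du,u_t\in C^{\beta,\beta/2}_{x,t}$, the right-hand side $f$ lies in $C^{\beta,\beta/2}_{x,t}$ and is uniformly positive. Applying Caffarelli's interior $C^{2,\beta}$ estimate for Monge--Amp\`ere at each time slice (the constant being uniform in $t$ via the perturbation argument of Tian--Wang \cite{W12Schauder}) gives $D^2 u(\cdot,t)\in C^{\beta}_x$ uniformly in $t$. To upgrade to joint H\"older regularity in $(x,t)$, return to the linearized equation for $u_k$: its coefficients $F_{p_{ij}}(D^2 u,Du)$ are now in $C^{\beta,\beta/2}_{x,t}$, so the parabolic Schauder theory produces $u_k\in C^{2+\beta,1+\beta/2}_{x,t}$ on a smaller cube, and hence $D^2 u\in C^{\beta,\beta/2}_{x,t}(Q_{\sigma r})$ with the stated dependence.

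The main obstacle is the failure of concavity of \eqref{eq:INT aGCFeq}, which precludes a direct Evans--Krylov argument. Passing to the Monge--Amp\`ere form restores concavity of the log-determinant in $D^2 u$, at the cost of a right-hand side depending on $u_t$ and $Du$; the delicate step, and the reason why a verbatim application of \cite{W12Schauder} does not suffice, is propagating the Krylov--Safonov H\"older estimate on the first-order derivatives through the Caffarelli $C^{2,\beta}$ theory uniformly in $t$, and then closing the bootstrap to obtain the joint H\"older control on $D^2 u$.
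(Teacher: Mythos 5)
Your proposal follows the same overall strategy as the paper: use the a priori eigenvalue bounds to establish uniform ellipticity, apply Krylov--Safonov to linearized equations for first-order derivatives to get parabolic H\"older bounds on $Du$ and $u_t$, rewrite the flow as a Monge--Amp\`ere problem with H\"older right-hand side, invoke Caffarelli's interior $C^{2,\beta}$ theory slice-by-slice in $t$, and then upgrade to joint $(x,t)$-H\"older control of $D^2u$. This matches the paper's proof, which organizes the same steps by citing Step~1 and Step~2 of Theorem~2.1 in the Tian--Wang reference. (The paper takes $(\det D^2 u)^{1/n}$ on the left, emphasizing concavity of the $n$th root of the determinant, whereas you take $\det D^2 u$ itself; this is cosmetic.)

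However, your final step contains a genuine circularity. You apply Caffarelli to obtain $D^2u(\cdot,t)\in C^\beta_x$ \emph{uniformly in} $t$, and then return to the linearized equation $\partial_t u_k = F_{p_{ij}}(D^2u,Du)\,\partial_i\partial_j u_k + F_{q_l}\,\partial_l u_k$, asserting that ``its coefficients $F_{p_{ij}}(D^2u,Du)$ are now in $C^{\beta,\beta/2}_{x,t}$,'' and applying parabolic Schauder. But $F_{p_{ij}}$ depends on $D^2u$, and at this stage you have only established spatial H\"older regularity of $D^2u$ uniformly in time, not H\"older continuity of $D^2u$ in the time variable --- which is precisely what you are trying to prove. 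Parabolic Schauder for $u_k$ requires the leading coefficients to be H\"older in $(x,t)$, so you cannot invoke it yet. The actual passage from ``$D^2u(\cdot,t)\in C^\beta_x$ with uniform bound'' to ``$D^2u\in C^{\beta,\beta/2}_{x,t}$'' is the content of Step~2 of the Tian--Wang argument and relies on comparing Monge--Amp\`ere solutions at nearby times using the stability of Caffarelli's estimates together with the H\"older-in-$t$ control on the data $f$; it is not an application of the parabolic Schauder estimate to the differentiated flow. As written, your closing bootstrap presupposes its own conclusion.
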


\begin{proof}
One can easily show that $\displaystyle  \sup_{Q_r}\grad, \;\sup_{Q_r}K, \; \inf_{Q_r}\lambda_{\min}$ control the ellipticity constant from above and below of the fully-nonlinear operator in \eqref{eq:INT aGCFeq}. Thus, space-time H\"{o}lder estimate for $\bd_t u$ can be obtained by Krylov-Safonov's estimate in \cite{KS81Holder}, as in Step 1 of the proof of Theorem 2.1 in \cite{W12Schauder}. Now, we transform \eqref{eq:INT aGCFeq} into the following equation
\begin{align*}
(\det D^2 u)^{\frac{1}{n}} = (u_t)^{\frac{1}{n\alpha}}(1+|D u|^2)^{\frac{(n+2)\alpha-1}{2n\alpha }}. 
\end{align*}
Since $(\det D^2 u)^{\frac{1}{n}}$ is a concave operator, the result of Caffarelli in \cite{CC95EPDE} gives us a space H\"{o}lder estimate for $D^2 u(\cdot,t)$ for each $t$. Thus,  H\"{o}lder estimate for $D^2u$ in $t$ can be achieved in the same manner as in Step 2 of the proof of Theorem 2.1 in \cite{W12Schauder}.
\end{proof}

We will now give the proof of the existence Theorem \ref{thm:LTE Long time existence}. 

\begin{proof}[Proof of Theorem \ref{thm:LTE Long time existence}] 
We will obtain a solution $\Sigma_t\coloneqq \{ (x,    u(\cdot,t)):  x \in \Omega_t\subset \rn\}$
as a limit  $$  \Sigma_t\coloneqq \lim_{j \to +\infty}   \Sj^j_t$$   where  $\Sj_t^j$ is 
 a strictly convex closed hypersurface which is symmetric with respect to the hyperplane $x_{n+1}=j$
and also evolves by the $\alpha$-Gauss curvature flow (\ref{eq:INT aGCF}). Let $\Sigma_t^j \coloneqq \Sj^j_t \bigcap \big( \rn \times [0,j] \big)$
denote the lower half of $\Sj^j_t$. 
Then, the symmetry guarantees that each   $\Sigma_t^j$  is a graph over the same hyperplane $\rn$. Thus, by applying the local a'priori estimates shown in sections
\ref{sec-prelim}-\ref{sec-speed} on compact subsets of $\rno$, we obtain 
uniform $C^{\infty}$ bounds on the lower half of $\Sigma_t^j$ necessary to
pass to the limit. 

\smallskip

\,  {\em Step 1 : The construction of the approximating sequence $\Sigma_t^j$.} \,   Let  $u_0, \Sigma_0$ and $\Omega$ be as in  
Theorem \ref{thm:INT Existence}  and assume that $\displaystyle \inf_{\Omega} u_0 = 0$.  

For each $j \in \mathbb{N}$,  we  reflect 
$\Sigma_0 \bigcap \big( \rn \times [0,j]\big)$ over the $j$-level hyperplane $\rn \times \{j\}\coloneqq\{(x,j):x \in \rn\}$ to  obtain a uniformly convex closed hypersurface $\bar \Sj^j_0$ defined by
\begin{align*}
\bar \Sj^j_0 = \{(x,h) \in \rno :  h \in \{u_0(x),2j-u_0(x)\},x \in \Omega, u_0(x) \leq j  \}. 
\end{align*}
Then, we let $\Sj^j_0$ denote  the $(1/j)$-envelope of $\bar \Sj^j_0$, which is uniformly convex closed hypersurface of class $C^{1,1}$ and we denote 
it by   $(\bar \Sj^j_0)^{1/j}$. Now, by Theorem 15 in \cite{A00aGCF}, there is a unique convex closed viscosity solution $\Sj^j_t$ of (\ref{eq:INT aGCF})
with initial data  $\bar \Sj^j_0$ which is defined for $t \in (0,T_j)$, where $T_j$ is its  maximal existing time. 
In addition, the uniqueness guarantees the symmetry of $\Sj^j_t$ with respect to the hyperplane  $\rn \times \{j\}$. Hence, its lower half  $\Sigma_t^j \coloneqq \Sj^j_t \bigcap \big( \rn \times [0,j] \big)$ is given  by  the  graph of a function $u^j(\cdot,t)$ defined on a convex set  $\Omega^j_t \subset \rn$, namely 
$$\Sigma^j_t  \coloneqq \Sj^j_t \bigcap \big( \rn \times [0,j] \big) = \{(x,u^j(x,t)):x \in \Omega^j_t\}.$$
Finally, we set
$$\Sigma_t = \bd \big \{ \bigcup_{j\in \mathbb{N}}\cv(\Gamma^j_t)\big \}, \quad \Omega_t = \bigcup_{j \in \mathbb{N}} \Omega^j_t, \quad t \in [0,T), \quad  \mbox{where} \,\,
 T=\sup_{j \in \mathbb{N}} T_j.
$$

\medskip

\noindent{\em Step 2 : Properties of  the approximating sequence $\Sigma_t^j$.} \, The following hold: 
\begin{enumerate}[label=(\subscript{P}{\arabic*})]
\item $\Sj^j_t \subset \Sj^{j+1}_t $, $T_j \leq T_{j+1}$, and $u_0(y)\leq  u^{j+1}(y,t) \leq  u^{j}(y,t) $ hold for all $j\in \mathbb{N}$, $t < T_j$, and $y \in \Omega_{t}^j $. 
\item If $B_R(x_0) \subset \Omega$, then $T \geq (n\alpha+1)^{-1}R^{n\alpha+1}$.
\end{enumerate}

\noindent\textit{Proof of properties} ($P_1$) - ($P_2$).
Since we have $ \Sj_0^{j} \preceq  \Sj_0^{j+1}$, the  comparison principle gives that  $ \Sj_t^{j}\preceq \Sj_t^{j+1}$, which yields  $u_0(y)\leq  u^{j+1}(y,t) \leq  u^{j}(y,t) $.
Moreover, since $\Sj^j_t$ exists until it converges to a point by Theorem 15 in \cite{A00aGCF}, we also have $T_j \leq T_{j+1}$.

Let us next show   (\textit{$P_2$}). $B_R(x_0) \subset \Omega$ means that there is a constant $h_0$ satisfying $B^{n+1}_R((x_0,h_0))\preceq \Sigma_0$. Set  $X_0=(x_0,h_0)$ and choosing  $j \geq R+h_0$ so that 
$B^{n+1}_R(X_0)\preceq \bar \Sj^j_0$ holds. On the other hand,  $ \bd B^{n+1}_{\rho_j(t)}(X_0)$ is a solution of (\ref{eq:INT aGCF}), where  $ \rho_j(t) = (R^{n\alpha +1}-(n\alpha+1)t)^{\frac{1}{n\alpha+1}}$. Hence, the comparison principle leads to $\bd B^{n+1}_{\rho_j(t)}(X_0) \preceq \Sj^j_t$, and by Theorem 15 in \cite{A00aGCF}, $\Sj^j_t$ exists while $\rho_j(t)>0$. Thus, 
($P_2$) holds.

\bigskip

\noindent{\em Step 3 : A'priori estimates for $\Sj^j_t$.} We verify that the a'priori estimates in section \ref{sec-prelim}-\ref{sec-speed} apply $\Sj^j_t$ for the cut-off $\psi_\beta$ with $  M < j$, even though $\Sj^j_0$ fails to be smooth.

By considering $(0,j)$ as a origin, we have a positive and symmetric support function $S^j_0$ of $\Sj^j_0$. We recall the compactly supported mollifiers $\varphi_\epsilon$ on $S^n$ in Proposition 6.2 in \cite{CD15Qk}, and let $S^{j,\epsilon}_0$ denote the convolution $S^j_0*\varphi_\epsilon$ on $S^n$. Then, by Proposition 6.3 in \cite{CD15Qk}, for a small $\epsilon$, $S^{j,\epsilon}_0$ is the support function of a strictly convex smooth closed hypersurface $\Sj^{j,\epsilon}_0$. Let $\Sj^{j,\epsilon}_t$ be the unique strictly convex smooth closed solution of \eqref{eq:INT aGCF} defined for $t \in [0, T^{j,\epsilon}_0)$ (c.f. in \cite{Chow85GCF}). Moreover, the uniqueness and the symmetry of  $\Sj^{j,\epsilon}_0$ guarantees that $\Sj^{j,\epsilon}_t$ also has the symmetry with respect to $\rn \times \{j\}$. Therefore, the lower half of $\Sj^{j,\epsilon}_t$ remains as a graph and thus a'priori estimates in sections \ref{sec-prelim}-\ref{sec-speed} apply for $\Sj^{j,\epsilon}_t$ with $M \leq j$.

On the other hand, by Proposition 6.3 in \cite{CD15Qk}, $\Sj^{j,\epsilon}_0$ converge to $\Sj^j_0$ in $C^1$ sense and the following holds
\begin{align*}
\liminf_{\epsilon \to 0}\lambda_{\min}(\Sj^{j,\epsilon}_0)(X_\epsilon) \geq \lambda^{\text{loc}}_{\min}(\Sj^j_0)(X)
\end{align*}
where $\{X_\epsilon\}$ is a set of points $X_\epsilon \in \Sj^{j,\epsilon}_0$ converging to $X\in \Sigma^j_0$ as $\epsilon \to 0$. 

Thus, for $M < j$ and $Q_M \eqqcolon \rn \times [-1,M)$, the following hold 
\begin{align*}
&\liminf_{\epsilon \to 0}\big( \inf_{X\in Q_M}\lambda_{\min}(\Sj^j_0)(X)\big) \geq \inf_{X\in Q_M}\lambda^{\text{loc}}_{\min}(\Sj^j_0)(X), && \limsup_{\epsilon \to 0}\big( \sup_{X\in Q_M}\grad(\Sj^j_0)(X)\big) \leq \sup_{X\in Q_M}\grad(\Sj^j_0)(X)
\end{align*}

Hence, Theorem \ref{thm:Pre Gradient} and Theorem \ref{thm:LB Local Lower bounds} gives the uniform gradient estimate and the uniform lower bounds of principal curvatures for $\Sj^{j,\epsilon}_t$, and therefore Theorem \ref{thm:SE Speed Estimate} guarantees the uniform upper bound for principal curvatures for $\Sj^{j,\epsilon}_t$. Now, we let the lower half of $\Sj^{j,\epsilon}_t$ be the graph of a function $u^{j,\epsilon}(0,t)$. Then, we have $|u^{j,\epsilon}|\leq j$ and thus Proposition \ref{prop:LTE C^(2,beta) Estimates} implies the local uniform $C^{2,\beta}$ estimates for $u^{j,\epsilon}$. Hence, the limit $\displaystyle \lim_{\epsilon \to 0}u^{j,\epsilon}= u^j$ satisfies the same estimates, and the lower half of $\Sj^j_t$ is a smooth graph for $t>0$.

\medskip

{\em Step 4 : Passing $\Sj^j_t$ to the limit $\Sigma_t$.}  First we observe that $ \bigcup_{j\in \mathbb{N}}\cv(\Gamma^j_t)$ is a convex body by ($P_1$); $\Gamma^j_t \preceq \Gamma^{j+1}_t \preceq \Sigma_0$. Thus, $\Sigma_t$ is a complete and convex hypersurface embedded in $\rno$. 

For any constant $M>0$, $t_0 >0$, $\sigma >0$ , and $j > M$, we apply the gradient estimate Theorem \ref{thm:Pre Gradient}  with $\beta=\min\{M,t_0^{-1}\sigma \}$, which yields a uniform gradient bound of $\Sj^j_t$ for $j >M$ and $t \in [0,t_0]$ in $\rn \times [0,M-2\sigma]$. So, the gradient function $\grad(\Sigma_t)$ of $\Sigma_t$ is bounded in each $\rn \times [0,M-2\sigma]$. Thus,
\begin{center}
\textit{$\Sigma_t$ is a complete convex graph of a function $u(\cdot,t)$ defined on a convex open set  $\Omega_t$.}
\end{center} 
In addition, by ($P_1$); $u_0(x) \leq u(x,t)$, 
\begin{center}
\textit{$u(\cdot,t)$ and $\Omega_t$ satisfy the conditions {\em(i),(ii),(iii)} of $u_0$ and $\Omega$ in {\em Theorem \ref{thm:INT Existence}}.}
\end{center} 

Also, by applying the curvature lower bound, Theorem \ref{thm:LB Local Lower bounds}, and the speed bound, Theorem \ref{thm:SE Speed Estimate}, we have a uniform curvature bound from below and above of $\Sj^j_t$ in $[0,M-\sigma]$ for large $j>M$, small $\sigma>0$, and $t \in [t_1,t_0]$ with $0 < t_1 < t_0< T $.

\medskip

\noindent {\em Step 5 : Passing $u^j$ to the limit $u$.}  Recall $ u_j$ the sequence of solutions to \eqref{eq:INT aGCFeq} defined in Step 1. For  $t \in (0,T)$, by (\textit{$P_1$}), the following holds 
\begin{align*}
 u(y,t) = \lim_{ j\rightarrow \infty}  u^j(y,t).
\end{align*}

We begin by choosing  $t_1,t_0 \in (0,T)$, ($t_1<t_0$) and $y_0 \in \Omega_{t_0}$. Since $\Omega_{t_0}$ is open, there is a small $r>0$ satisfying $\overline{B_{r}(y_0)} \subset \Omega_{t_0}$ and $r^2 < t_0-t_1$. 
On the other hand, (\textit{$P_1$}) gives  the monotone convergence of $\Omega^j_{t_0}$ to $\Omega_{t_0}$. Hence, for some large $J_0\in \mathbb{N}$, we have $B_{r}(y_0)\subset \Omega^{J_0}_{t_0}$, implying that  $ u^{J_0}(y,t_0) \leq J_0$,  for all $y \in B_r(y_0)$. 
Thus, for all $j \geq J_0$, $t \in [t_1,t_0]$, and $y \in B_r(y_0)$, the convexity of $ u^j$ and (\textit{$P_1$}) lead to
\begin{equation*}\label{eq:LTE Domain}
0 \leq u_0(y) \leq  u^j(y,t) \leq  u^j(y,t_0) \leq  u^{J_0}(y,t_0) \leq J_0. \tag{6.1}
\end{equation*}
Notice that in Step 4,  we have shown uniform estimates for $\lambda_{\min}^{-1}$, $\grad$, and $K$ of $\{\Sigma^j_t\}_{t \in [t_1,t_0]}$ in $\rn \times [0,J_0]$ for  $j >J_0$.
Therefore, Proposition \ref{prop:LTE C^(2,beta) Estimates} and (\ref{eq:LTE Domain}) imply  that $ u$ is of class $C_{\text{loc}}^{2,\beta}(B_r(y_0) \times (t_1,t_0])$ for some $\beta \in (0,1)$, and $ u(y,t)$ is a strictly convex $C^{2,\beta}$ solution of (\ref{eq:INT aGCFeq}). Hence, standard regularity results show that actually $ u(y,t)$ is $C^\infty$ smooth. Thus, by (\textit{$P_2$}), we assure that $\Sigma_t$ is the desired solution. This finishes the proof of Theorem \ref{thm:LTE Long time existence}. 
\end{proof}

\bigskip 
\bigskip 

To finish with the proof of  Theorem \ref{thm:INT Existence}, it remains  to  show that  $$\Omega_t =\Omega$$ which follows from the next Theorem.

\begin{theorem}
Let $\Omega$, $ u_0$  and $\Sigma_0$ satisfy the conditions in \emph{Theorem \ref{thm:INT Existence}}. Assume $\Sigma_t=\{(x,u(x,t)):\Omega_t\}$, $t \in [0,T)$, is a strictly convex smooth complete graph solution of \emph{(\ref{eq:INT aGCF})} such that $\Omega_t$, $u(\cdot,t)$ and $\Sigma_t$ satisfy the conditions of $\Omega$, $u_0$ and $\Sigma_0$ in \emph{Theorem \ref{thm:INT Existence}}. Then, for any closed ball $\overline{B_{R_0}(y_0)} \subset \Omega$ and any $t_0 \in (0,T)$, the following holds
\begin{equation*}
B_{R_0}(y_0)\subset \Omega_{t_0}. 
\end{equation*}
\end{theorem}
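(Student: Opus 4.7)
The plan is to use an upper barrier to prevent $\Omega_t$ from shrinking past $B_{R_0}(y_0)$. Choose $R_1 > R_0$ with $\overline{B_{R_1}(y_0)} \subset \Omega$; I will construct a function $V \in C^2(B_{R_1}(y_0) \times [0, t_0])$ which is a supersolution of \eqref{eq:INT aGCFeq}, satisfies $V(x, 0) \geq u_0(x)$ on $B_{R_1}(y_0)$, and obeys $V(x, t) \to +\infty$ as $x \to \partial B_{R_1}(y_0)$ for each fixed $t \in [0, t_0]$.

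For the construction of $V$ itself, when $\alpha > 1/2$ the natural choice is $V(x,t) = \phi(x) + \lambda t + C$, where $\phi$ is an Urbas translating-soliton profile on the convex bounded domain $B_{R_1}(y_0)$ supplied by \cite{U98GCFsoliton}: a smooth strictly convex function with $\phi \to +\infty$ at $\partial B_{R_1}(y_0)$ satisfying $\lambda = (\det D^2 \phi)^\alpha (1 + |D\phi|^2)^{-((n+2)\alpha - 1)/2}$. The constant $C$ is chosen so that $V(\cdot, 0) \geq u_0$ on $\overline{B_{R_1}(y_0)}$, which is possible since $u_0$ is continuous, hence bounded, on this compact subset of $\Omega$. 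For the complementary range $\alpha \in (0, 1/2]$, where Urbas's bounded-domain solitons are unavailable, I would instead build an explicit radial supersolution of the form $V(x,t) = v(|x-y_0|^2) + \lambda t + C$, choosing $v$ to blow up at $|x-y_0|=R_1$ at a rate for which the eigenvalue decomposition of $D^2V$ allows one to verify the supersolution inequality directly by taking $\lambda$ sufficiently large.

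Granting $V$, define $t^* = \sup\{t \in [0, t_0] : B_{R_1}(y_0) \subset \Omega_t\}$; since $\Omega_0 = \Omega \supset B_{R_1}(y_0)$, we have $t^* > 0$. On $B_{R_1}(y_0) \times [0, t^*)$ both $u$ and $V$ are smooth, and $w = u - V$ satisfies a parabolic inequality that forbids a positive interior maximum; moreover $w \leq 0$ at $t=0$ and $w \to -\infty$ as $x \to \partial B_{R_1}(y_0)$ because of the blow-up of $V$. The maximum principle therefore yields $u \leq V$ throughout $B_{R_1}(y_0) \times [0, t^*)$. If $t^* \leq t_0$, then some interior point $x^* \in B_{R_1}(y_0)$ would lie on $\partial \Omega_{t^*}$, forcing $u(x, t) \to +\infty$ as $(x, t) \to (x^*, t^*)$ from within $B_{R_1}(y_0) \cap \Omega_t$; but $V$ remains bounded near $(x^*, t^*)$ since $x^*$ is an interior point of $B_{R_1}(y_0)$, contradicting $u \leq V$. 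Hence $t^* > t_0$, giving $B_{R_1}(y_0) \subset \Omega_{t_0}$ and in particular $B_{R_0}(y_0) \subset \Omega_{t_0}$. The main obstacle is the existence of the barrier $V$ in the full range $\alpha > 0$: the case $\alpha > 1/2$ follows by directly quoting Urbas, while the case $\alpha \in (0, 1/2]$ requires a careful explicit radial construction, which is where most of the computation lives.
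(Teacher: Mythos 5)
Your approach is genuinely different from the paper's. The paper uses a compactly supported geometric barrier: a rotationally symmetric convex cap $\Phi^{\delta,m}_t$ confined to heights $h\in[m-1,m]$, whose Gauss curvature is of order $\delta$ and whose normal speed is of order $\delta^\alpha$, shown to remain enclosed by $\Sigma_t$ for all $t\leq t_0$ once contact at the two boundary circles of the cap is ruled out (at the top level $m$ because $\Sigma_t$ is a graph; at the bottom level $m-1$ because such contact forces $|Du|\leq(2\delta)^{-1}$, which by convexity of $u$ caps $u$ by a constant $M_\delta$, so $m>M_\delta+1$ prevents it). You instead propose to compare $u$ against a supersolution $V$ that blows up at $\partial B_{R_1}(y_0)$. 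The difficulty is that the barrier you need cannot exist when $\alpha\leq 1/2$, and this is not merely an undone computation. If $V(x,t)=v(|x-y_0|)+\lambda t+C$ is convex with $v(r)\to+\infty$ as $r\to R_1^-$, then $v'\to+\infty$, and the right-hand side of \eqref{eq:INT aGCFeq} behaves near $r=R_1$ like $(v'')^\alpha(v')^{1-3\alpha}$ up to bounded factors; boundedness of this quantity forces $v''\lesssim(v')^{3-1/\alpha}$, and for $\alpha\leq 1/2$ the exponent $3-1/\alpha\leq 1$, so integrating the differential inequality forces $v'$ to stay bounded near $R_1$, a contradiction. Hence the speed of any such barrier must diverge at $\partial B_{R_1}(y_0)$ and no constant $\lambda$ can dominate it. This is exactly the dichotomy behind the Urbas result you invoke for $\alpha>1/2$: for $\alpha\leq 1/2$ the translators are entire graphs, not graphs over bounded domains, and for essentially the same reason there is no bounded-speed radial supersolution blowing up at a finite sphere.

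The paper's construction sidesteps this because its barrier never blows up: it is a finite cap of bounded height and bounded (though large, of order $1/\delta$) gradient, so its Gauss-curvature speed stays uniformly small for every $\alpha>0$. The price is having to rule out contact on the two boundary circles, and the bottom exclusion requires an extra idea absent from your sketch: contact with the shallow side of the cap pins down the gradient of $u$ there, which by convexity pins down its height, so raising the cap above $M_\delta+1$ avoids it. If you want a PDE-barrier proof covering all $\alpha>0$, you would have to give up the boundary blow-up of $V$ and localize in the vertical direction in a similar way, at which point you would essentially be reconstructing the paper's argument. Even for $\alpha>1/2$, some additional care is needed in the definition of $t^*$ (you should track $\overline{B_{R_1}(y_0)}\subset\Omega_t$, not just the open ball, so the boundary comparison is legitimate) and in the degenerate-parabolic comparison between strictly convex sub- and supersolutions, though these are routine next to the barrier-existence issue.
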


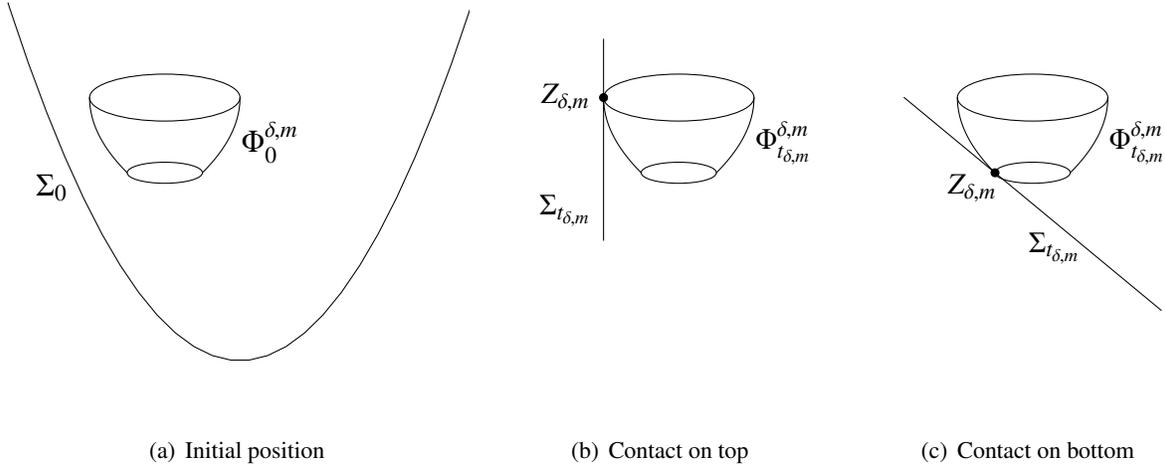
\begin{figure}[h]\label{figure-super}
\subfigure[Initial position]{
\begin{tikzpicture}[line cap=round,line join=round,>=triangle 45,x=1.0cm,y=1.0cm]
\clip(-0.62688201647,-4.24574436163) rectangle (5.54442150691,1.25790351741);
\draw [samples=50,rotate around={-90.:(0.5,0.)},xshift=0.5cm,yshift=0.cm,domain=0:1)] plot (\x,{(\x)^2/2/1.0});
\draw [samples=50,rotate around={-270.:(2.5,0.)},xshift=2.5cm,yshift=0.cm,domain=-1:0)] plot (\x,{(\x)^2/2/1.0});
\draw [rotate around={0.:(1.5,0.)}] (1.5,0.) ellipse (1.cm and 0.31224989992cm);
\draw [rotate around={0.:(1.5,-1.)}] (1.5,-1.) ellipse (0.5cm and 0.14cm);
\draw [samples=50,rotate around={0.:(2.5,-3.5)},xshift=2.5cm,yshift=-3.5cm,domain=-6.0:6.0)] plot (\x,{(\x)^2/2/1.0});
\draw[color=black] (2.9,-0.6) node[scale=1.1] {$\Phi^{\delta,m}_{0}$};
\draw[color=black] (0,-1.2) node[scale=1.1] {$\Sigma_0$};
\end{tikzpicture}
}
\subfigure[Contact on top]{
\begin{tikzpicture}[line cap=round,line join=round,>=triangle 45,x=1.0cm,y=1.0cm]
\clip(-1,-4.24574436163) rectangle (3.5,1.25790351741);
\draw [samples=50,rotate around={-90.:(0.5,0.)},xshift=0.5cm,yshift=0.cm,domain=0:1)] plot (\x,{(\x)^2/2/1.0});
\draw [samples=50,rotate around={-270.:(2.5,0.)},xshift=2.5cm,yshift=0.cm,domain=-1:0)] plot (\x,{(\x)^2/2/1.0});
\draw [rotate around={0.:(1.5,0.)}] (1.5,0.) ellipse (1.cm and 0.31224989992cm);
\draw [rotate around={0.:(1.5,-1.)}] (1.5,-1.) ellipse (0.5cm and 0.14cm);
\draw (0.5,-1.90012982601) -- (0.5,0.777364479141);
\begin{scriptsize}
\draw [fill=black] (0.5,0.) circle (1.5pt);
\draw[color=black] (2.9,-0.6) node[scale=1.4] {$\Phi^{\delta,m}_{t_{\delta,m}}$};
\draw[color=black] (0,0) node[scale=1.4] {$Z_{\delta,m}$};
\draw[color=black] (0,-1.5) node[scale=1.4] {$\Sigma_{t_{\delta,m}}$};
\end{scriptsize}
\end{tikzpicture}
}\quad
\subfigure[Contact on bottom]{
\begin{tikzpicture}[line cap=round,line join=round,>=triangle 45,x=1.0cm,y=1.0cm]
\clip(-0.62688201647,-4.24574436163) rectangle (3.5,1.25790351741);
\draw [samples=50,rotate around={-90.:(0.5,0.)},xshift=0.5cm,yshift=0.cm,domain=0:1)] plot (\x,{(\x)^2/2/1.0});
\draw [samples=50,rotate around={-270.:(2.5,0.)},xshift=2.5cm,yshift=0.cm,domain=-1:0)] plot (\x,{(\x)^2/2/1.0});
\draw [rotate around={0.:(1.5,0.)}] (1.5,0.) ellipse (1.cm and 0.31224989992cm);
\draw [rotate around={0.:(1.5,-1.)}] (1.5,-1.) ellipse (0.5cm and 0.14cm);
\draw [domain=-0.209605657245:3.21017674721] plot(\x,{(-0.0857864376269-0.414213562373*\x)/0.5});
\begin{scriptsize}
\draw [fill=black] (1.,-1.) circle (1.5pt);
\draw[color=black] (0.7,-1.2) node[scale=1.4] {$Z_{\delta,m}$};
\draw[color=black] (1.8,-2) node[scale=1.4] {$\Sigma_{t_{\delta,m}}$};
\draw[color=black] (2.9,-0.6) node[scale=1.4] {$\Phi^{\delta,m}_{t_{\delta,m}}$};
\end{scriptsize}
\end{tikzpicture}
}
\caption{Properties of  barrier }
\end{figure}

\begin{proof}
Let $y_0 \in \Omega$, $R_0$ and $t_0 \in (0,T)$ as in the statement of the Theorem.  Without loss of generality, we may assume  that $y_0=0$ and $R_0<1$. The given condition $\overline{B_{R_0}(0)} \subset \Omega$ implies that  there exists a constant $m_0 \geq 0$ such that $\overline{B_{R_0}(0)} \prec L_{m_0}(\Sigma_0)$. For  each $m \geq m_0+1$ and each $\delta>0$ sufficiently small  satisfying
\begin{equation*}
\delta+2^{1+n\alpha} R_0^{-n\alpha}\delta^{\alpha}t_0 < R_0/2
\end{equation*}
we define the  function $f^{\delta,m}:[m-1,m]\times [0,t_0] \rightarrow \mathbb{R}$ by
\begin{equation*}
f^{\delta, m}(h,t) \coloneqq R_0- \delta (h-m)^2 - 2^{1+n\alpha} R_0^{-n\alpha}\delta^{\alpha} t.
\end{equation*}
Let  $f^{-1}(\cdot,t)$ denote  the inverse function of $f(\cdot,t)$ and $\Phi^{\delta, m}_t$ denote the graph of the rotationally symmetric function $\varphi(y)=f^{-1}(|y|,t)$, namely 
\begin{equation*}
\Phi^{\delta, m}_t \coloneqq \{(y,h):|y| =f^{\delta, m}(h,t) \}  
\end{equation*}
(see in the figure above). 
By definition, we have
$
\Phi^{\delta, m}_0 \prec \Sigma_0.  
$
We  will show that $ \Phi^{\delta, m}_t $  defines a supersolution of \eqref{eq:INT aGCF} and consequently that  
\begin{equation*}
\Phi^{\delta, m}_{t_0} \prec \Sigma_{t_0}, \qquad \mbox{for} \,\, m > M_\delta+1
\end{equation*}
for some constant  $M_\delta$ depending on $\delta$. Thus,
\begin{equation*}
B_{f^{\delta,m}(m,t_0)}(0)=L_m(\Phi^{\delta, m}_{t_0}) \prec L_m(\Sigma_{t_0}) \preceq \Omega_{t_0}
\end{equation*}
and by passing $\delta$ to zero, we will obtain the desired result. 

Let us first see that $ \Phi^{\delta, m}_t $  defines supersolution of \eqref{eq:INT aGCF}. 
For convenience, we  denote $\bd_r (f^{-1})(r,t)$ and $\bd_{rr} (f^{-1})(r,t)$ by $f^{-1}_r$ and $f^{-1}_{rr}$, respectively. Then, the Gauss curvature $K$ of $\Phi^{\delta, m}_t$ satisfies
\begin{align*}
K = \frac{f^{-1}_{rr} |f^{-1}_r|^{n-1}}{r^{n-1}(1+|f^{-1}_r|^2)^{\frac{n+2}{2}}} \leq \frac{f^{-1}_{rr} }{|R_0/2|^{n-1}(1+|f^{-1}_r|^2)^{\frac{3}{2}}}=-\frac{2^{n-1}f_{hh} }{R_0^{n-1}(1+f_h^2)^{\frac{3}{2}}} \leq \frac{2^{n}\delta }{R_0^{n-1}}<2^nR_0^{-n}\delta.
\end{align*}
Also, the gradient function $\upsilon$ of $f^{-1}$ on $L_{[m-1,m)}(\Phi^{\delta, m}_t)$ satisfies
\begin{align*}
\upsilon = (1+|f^{-1}_r|^2)^{\frac{1}{2}}=(1+\frac{1}{4\delta^2(h-m)^2}  )^{\frac{1}{2}} \leq \frac{(1+4\delta^2(h-m)^2)^{\frac{1}{2}}}{2\delta(m-h)} \leq \frac{1}{\delta(m-h)}
\end{align*}
since $\delta <R_0/2<1/2$. Finally, on $L_{[m-1,m)}(\Phi^{\delta, m}_t)$, we can derive from $f^{-1}(f(h,t),t)=h$ the following
\begin{align*}
\bd_t (f^{-1}) = -f^{-1}_r \bd_t f =\frac{2^{1+n\alpha} R_0^{-n\alpha}\delta^{\alpha}}{2\delta(m-h)}= \frac{2^{n\alpha} R_0^{-n\alpha}\delta^{\alpha}}{\delta(m-h)}> K^{\alpha}\upsilon.
\end{align*}
Thus, $\Phi^{\delta, m}_t$ is a supersolution of \eqref{eq:INT aGCF}. In other words, $\Sigma_t$ cannot contact with $\Phi^{\delta, m}_t$ in the interior of $\Phi^{\delta, m}_t$. 

If there exists the contact time $t^{\delta,m}\in (0,t_0]$ such that $\Sigma_t \bigcap \cv(\Phi^{\delta, m}_t) =\emptyset$ for $t \in (0,t^{\delta,m})$ and $\Sigma_{t^{\delta,m}} 
\bigcap \cv(\Phi^{\delta, m}_{t^{\delta,m}}) \neq \emptyset$, then we denote the contact set by $Z_{\delta,m}$
\begin{equation*}
Z_{\delta,m} \coloneqq \Sigma_{t^{\delta,m}} \bigcap \cv(\Phi^{\delta, m}_{t^{\delta,m}})=\Sigma_{t^{\delta,m}} \bigcap \Phi^{\delta, m}_{t^{\delta,m}}.
\end{equation*}
We have just seen that 
\begin{equation*}
Z_{\delta,m} \subset \bd(\Phi^{\delta, m}_{t^{\delta,m}})=  L_{(m-1)}(\Phi^{\delta, m}_{t^{\delta,m}})\bigcup L_{m}(\Phi^{\delta, m}_{t^{\delta,m}})  
\end{equation*}
and  since $\Sigma_t$ is a graph, it  cannot contact $\Phi^{\delta, m}_{t}$ on $L_m(\Phi^{\delta, m}_t)$. Hence,
\begin{equation}\label{eq:LTE Contact set I}
Z_{\delta,m} \subset L_{(m-1)}(\Phi^{\delta, m}_{t^{\delta,m}}).  \tag{6.1}
\end{equation}
If there exist a contact point $(z_0,m-1) \in Z_{\delta,m}$, then $|z_0|=f^{\delta,m}(m-1,t_{\delta,m})< R_0$ and the slope of the graph of $u$ at this point is at most equal to  the slope of $\Phi^{\delta, m}$
(see in Figure 2(c)), hence 
$|Du|(z_0,t_{\delta,m})\leq (2\delta)^{-1}.$

On the other hand, since  $A \coloneqq \{ (y,t) :t\in [0,t_0] , y \in \Omega_t, |y|\leq R_0, |Du|(y,t) \leq (2\delta)^{-1} \}$ is a compact set, the function  $u$ attains its maximum in $A$. Set 
$$M \coloneqq \max\{u(y,t): (y,t)\in A \}.$$
Since $(z_0,t_{\delta,m})\in A$, we must have  $m-1=u(z_0,t_{\delta,m}) \leq M_\delta$. Therefore, if $m>M_\delta+1$, then
\begin{equation}\label{eq:LTE Contact set II}
Z_{\delta,m} \bigcap L_{(m-1)}(\Phi^{\delta, m}_{t^{\delta,m}}) =\emptyset  \tag{6.2}
\end{equation}
concluding that $\Phi^{\delta, m}_{t_0} \prec \Sigma_{t_0}$ and $B_{f^{\delta,m}(m,t_0)}(0)  \preceq \Omega_{t_0}$. 
By passing $\delta$ to zero, we obtain the desired result.

\end{proof}

\centerline{\bf Acknowledgements}

\smallskip 

\noindent The authors are indebted  to Pengfei  Guan for many fruitful discussions. 
\smallskip

\noindent P. Daskalopoulos  was  partially supported by NSF grant DMS-1266172.

\noindent Ki-Ahm Lee  was supported by the National Research Foundation of Korea (NRF) grant funded by the Korea government (MSIP) (No.2014R1A2A2A01004618). 
Ki-Ahm Lee also holds  a joint appointment with the Research Institute of Mathematics of Seoul National University.

\bibliographystyle{abbrv}

\bibliography{myref}

\end{document}